\newtheorem{Thm}{Theorem}[section]
\newtheorem{Lem}[Thm]{Lemma}
\newtheorem{Prop}[Thm]{Proposition}
\newtheorem{Cor}[Thm]{Corollary}
\newtheorem{Conj}[Thm]{Conjecture}
\theoremstyle{remark}
\newtheorem{Rem}[Thm]{Remark}
\begin{document}

\title[On the modularity of elliptic curves]{modularity of elliptic curves over abelian totally real fields unramified at 3, 5, and 7}
\author{Sho Yoshikawa}
\address{Graduate School of Mathematical Sciences,
the University of Tokyo, 3-8-1, Komaba, Meguro-Ku, Tokyo 153-8914, Japan}
\email{yoshi@ms.u-tokyo.ac.jp}

\maketitle

\begin{abstract}
Let $E$ be an elliptic curve over an abelian totally real field $K$ unramified at 3,5, and 7.
We prove that $E$ is modular.
%As an application, we prove that, if an elliptic curve is defined over a totally real field with 7 unramified and has an irreducible mod 7 representation, then it is modular. 
\end{abstract}

\section{Introduction}

Let $E$ be an elliptic curve over a totally real field $K$. We say that $E$ is modular if there exists a Hilbert eigenform $f$ over $K$ of parallel weight $2$ such that $L(E,s)=L(f,s)$. The classical Shimura-Taniyama conjecture asserts that all elliptic curves over $\mathbb{Q}$ are modular. This conjecture for semistable elliptic curves, which was the crucial step in proving Fermat's Last Theorem, was proved by Wiles \cite{Wi} and Tayor-Wiles \cite{TW}. Later, the general case of the conjecture was completed by Breuil-Conrad-Diamond-Taylor \cite{BCDT}.

The Shimura-Taniyama conjecture has a natural generalization to totally real fields:

\begin{Conj}
\label{ST}
Let $K$ be a totally real number field. Then, any elliptic curve over $K$ is modular.
\end{Conj}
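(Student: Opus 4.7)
The plan is to attack Conjecture \ref{ST} by the now-standard combination of modularity lifting theorems with a residual modularity input obtained from the mod $p$ Galois representations $\bar\rho_{E,p}$ for $p \in \{3,5,7\}$. Given any elliptic curve $E/K$, I would study the three residual representations on $E[3]$, $E[5]$ and $E[7]$, and try to show that at least one of them is modular after a suitable solvable totally real base change $K'/K$; once one has residual modularity together with an appropriate $R=T$ theorem one obtains modularity of $E/K'$, and cyclic base change descent then gives modularity of $E/K$ itself.

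The first ingredient I would assemble is the pool of modularity lifting theorems: at $p=3$, Langlands--Tunnell supplies modularity of $\bar\rho_{E,3}$ whenever its image is solvable, and Kisin-type lifting theorems then promote this to modularity of $\rho_{E,3}$ under mild local hypotheses at $3$; at $p=5$ and $p=7$ one has analogous lifting theorems (Skinner--Wiles, Barnet-Lamb--Gee--Geraghty, Thorne) provided $\bar\rho_{E,p}$ has sufficiently large image and behaves well locally. The second ingredient would organise the set of $E/K$ for which all three residual representations fail to be modular into $K$-rational points on an explicit, finite list of moduli spaces $X_E(p)$ and fibre products thereof, whose $\mathbb{Q}$-points (and quadratic points) are controlled by Mazur, Ellenberg, Freitas--Le Hung--Siksek and others through their rational points on the modular curves classifying the matching ``sym\-plectic'' level structures $E'[p] \simeq E[p]$.

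The main obstacle, and the reason the full conjecture remains open, is that over a general totally real $K$ both ingredients become genuinely harder. On the lifting side, when the residue characteristic $p$ ramifies wildly in $K$, the local deformation rings at $p$ are poorly understood, and the existing $R=T$ results either impose Fontaine--Laffaille type restrictions or limit ramification; extending them to arbitrary weight-$2$ crystalline behaviour requires essentially new input on the geometry of local deformation rings. On the residual side, the ``bad'' loci of $E$ whose $\bar\rho_{E,p}$ are simultaneously non-solvable for $p=3$ and small-image for $p=5,7$ correspond to rational points on surfaces or higher-dimensional varieties, and no analogue of Faltings' theorem is available in that generality; one has no uniform mechanism for ruling out unexpected $K$-rational points as $K$ ranges over all totally real fields.

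Thus, in outline, my proposal reduces Conjecture \ref{ST} to (i) $R=T$ theorems covering arbitrary ramification at $\{3,5,7\}$ and (ii) a classification of $K$-rational points on a fixed finite list of modular varieties valid for every totally real $K$. The paper under review makes substantial progress on (i) and uses the abelian--unramified hypothesis to bypass (ii); a full proof will require removing both restrictions, and (ii) is where I expect the decisive new idea to be needed.
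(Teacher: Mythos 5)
The statement you were asked about is Conjecture \ref{ST}, which is precisely that --- a conjecture. The paper does not prove it, and your proposal, quite correctly, does not claim to either: it is a strategy survey plus an honest account of why the general case is out of reach. There is therefore no ``paper's own proof'' to compare against, and no gap to report in the usual sense; the only meaningful comparison is between your proposed framework and the route the paper actually takes for its partial result (Theorem \ref{mainab}).

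On that comparison: your outline is the standard Wiles-style template (residual modularity for some $p\in\{3,5,7\}$, then $R=T$, then solvable descent), and it does describe the broad shape of the paper's argument. But the paper explicitly disavows the second ingredient you put at the centre, namely the classification of rational points on modular curves and their fibre products (see the Remark after Theorem \ref{mainab}: ``we do not use geometric techniques on modular curves''). Instead, for $p=5,7$ it handles the residually large-image cases via Theorem \ref{Thorne} and Theorem \ref{main2}, where the key input is a careful local analysis (Kraus's results on $\bar{\rho}_{E,p}|_I$ in the additive case) showing that the projective image is too large for $\bar{\rho}_{E,p}|_{G_{K(\zeta_p)}}$ to be reducible, together with the Skinner--Wiles nearly-ordinary and residually reducible lifting theorems \cite{SW1}, \cite{SW2} for the remaining dihedral/reducible cases --- this is where the abelian hypothesis enters, not as a device to bypass a rational-points problem. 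Finally, at $p=3$ the paper does not invoke Langlands--Tunnell directly but twists $E$ quadratically to make it semistable at all primes above $3$ and quotes Freitas's Theorem \ref{semi}. So your item (ii) overstates the role of modular-curve geometry in this particular paper, and your item (i) slightly misattributes where the abelian hypothesis is used; otherwise your assessment of what remains open is accurate.
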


 Recently, a breakthrough on this problem was brought by Freitas-Le Hung-Siksek. In their paper \cite{FLS}, they prove Conjecture \ref{ST} for any quadratic field.
The aim of this paper is to attack Conjecture \ref{ST} for abelian totally real fields. More precisely, the main theorem is the following:

\begin{Thm}
\label{mainab}
Let $K$ be a totally real number field which is abelian over $\mathbb{Q}$.
Suppose that $K$ is unramified at every prime above 3,5, and 7.
Then, any elliptic curve over $K$ is modular.
\end{Thm}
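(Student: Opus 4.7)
The plan is to adapt the strategy of Freitas, Le Hung, and Siksek \cite{FLS} from real quadratic fields to the abelian totally real setting. At its core, this strategy combines modularity lifting theorems at each of the primes $3, 5, 7$ with an analysis of the ``exceptional'' elliptic curves whose three mod-$p$ residual Galois representations $\bar\rho_{E,p}$ (for $p \in \{3, 5, 7\}$) are simultaneously of small image.

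The first step is to invoke a modularity lifting theorem of Kisin-Gee-Breuil-Conrad-Diamond-Taylor type at each $p \in \{3, 5, 7\}$. The hypothesis that $K$ is unramified at $p$ is precisely what makes these theorems applicable, since it guarantees that $\rho_{E,p}$ restricted to a decomposition group at a prime above $p$ is crystalline with Hodge-Tate weights $(0,1)$, falling into the favorable range of the lifting theorems. The residual modularity of $\bar\rho_{E,3}$ follows from the Langlands-Tunnell theorem when its projective image is sufficiently large, and Taylor's $3$-$5$ and $3$-$7$ switches then transfer modularity from the mod-$3$ side to handle the remaining cases at $5$ and $7$. The net effect is that $E$ is modular except possibly when $\bar\rho_{E,p}$ has exceptional projective image (for instance, reducible or dihedral) for every $p \in \{3, 5, 7\}$ simultaneously.

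Such simultaneously exceptional $E/K$ correspond to $K$-points on a finite list of modular curves of level dividing $105 = 3 \cdot 5 \cdot 7$, namely the curves $X_0(15)$, $X_0(21)$, $X_0(35)$ together with associated covers and twists parametrizing the relevant joint configurations of isogenies and subgroups. The second step is to show that every such exceptional $K$-point either yields a $\mathbb{Q}$-curve (modular by Ribet's theorem and Khare-Wintenberger), a CM elliptic curve (automatically modular by classical theory), or a point that descends to a proper subfield where modularity is already known inductively. I would attack this by studying the action of $\mathrm{Gal}(K/\mathbb{Q})$ on the $K$-points of each modular curve, using the Kronecker-Weber theorem to locate them inside concrete cyclotomic towers.

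The main obstacle is handling this exceptional locus uniformly as the degree of $K$ varies. In the quadratic case, FLS exploit explicit low-genus models together with Mordell-Weil sieving; for arbitrary abelian totally real fields one must replace these ad hoc arguments with a structural descent principle. I expect the decisive technical input to be a lemma showing that any $K$-point on one of the finitely many exceptional modular curves, defined over an abelian totally real field unramified at $\{3, 5, 7\}$, must either descend to $\mathbb{Q}$, give rise to a $\mathbb{Q}$-curve, or come from a CM point. Formalizing this, most likely via a careful analysis of the isogeny characters attached to the reducible representations $\bar\rho_{E,p}$ and the constraints imposed on them by the abelianness and unramification of $K$, will be the hardest part of the argument.
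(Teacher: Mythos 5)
Your proposal diverges substantially from the paper's actual argument, and the divergence matters: the paper explicitly disclaims the modular-curve-geometry route, whereas that route is the load-bearing part of your plan, and the step you yourself flag as hardest is where the proposal breaks down.

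The paper's proof never touches $X_0(15)$, $X_0(21)$, $X_0(35)$ or their twists. Instead it reduces, via Thorne's theorem (mod-$5$ irreducible $\Rightarrow$ modular when $\sqrt{5}\notin K$) and the paper's own Theorem \ref{main2} (mod-$7$ irreducible $\Rightarrow$ modular when $7$ is unramified), to the case where $\bar\rho_{E,5}$ and $\bar\rho_{E,7}$ are \emph{both} reducible, so both land in Borel subgroups $B(\mathbb{F}_5)$ and $B(\mathbb{F}_7)$ of orders $4^2\cdot 5$ and $6^2\cdot 7$. The key observation is then purely local at $3$: by the torsion version of N\'eron--Ogg--Shafarevich, the inertia at any $\mathfrak{p}\mid 3$ acts on $E[5]$ and $E[7]$ through one common quotient, whose order must divide $\gcd(4^2\cdot 5,\, 6^2\cdot 7)=4$; since $B(\mathbb{F}_7)$ has no cyclic subgroup of order $4$ and the tame quotient is cyclic, that order is at most $2$. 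So a quadratic twist of $E$ is semistable at all $\mathfrak{p}\mid 3$, and Freitas's theorem (\cite[Theorem 6.3]{Fr}, built on the Skinner--Wiles residually reducible modularity lifting theorem \cite{SW1}) finishes. This is also where the abelian hypothesis enters: it is required by \cite{SW1}, not by any Kronecker--Weber-flavored descent of modular curve points.

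The gap in your proposal is precisely the ``decisive technical input'' you defer: classifying $K$-points on the exceptional modular curves over an arbitrary abelian totally real field unramified at $\{3,5,7\}$, and proving each one is CM, a $\mathbb{Q}$-curve, or descends. FLS could do the analogue only for quadratic $K$, by explicit equations, Mordell--Weil sieving, and Chabauty-type arguments on specific low-genus curves; none of that scales to fields of unbounded degree, and there is no known structural ``descent principle'' of the sort you posit. Your appeal to abelianness via Kronecker--Weber also does not plausibly give enough control: abelianness restricts where $K$ sits, but not the $K$-rational points of a fixed positive-genus curve as $[K:\mathbb{Q}]$ grows. Without that lemma your argument does not close; with the paper's inertia-at-$3$ trick plus Skinner--Wiles, the modular-curve analysis is entirely avoided.
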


\begin{Rem}
In contrast to the proof of \cite[Theorem 1]{FLS}, we do not use geometric techniques on modular curves.
Instead, our proof involves the modularity lifting theorem \cite{SW1} in an essential way, and hence at present we cannot remove the hypothesis that $K$ is abelian.
\end{Rem}

In the rest of this section, we describe the structure of the proof of Theorem \ref{mainab}.\\

Firstly, we prove the following proposition, which is a complementary result of \cite[Theorem 7]{FLS}; we consider elliptic curves with additive reduction at a prime dividing $p=5$ or $7$ instead of semi-stable reduction. The proof is given in Section \ref{pfmainadd}.

\begin{Prop}
\label{mainadd}
Let $p=5$ or $7$.
Let $K$ be a totally real field, $\mathfrak{p}$ a prime of $K$ dividing $p$, and $E$ an elliptic curve over $K$.
Assume that $K$ is unramified at $\mathfrak{p}$ and that $E$ has additive reduction at $\mathfrak{p}$ with $\bar{\rho}_{E,p}: \mathrm{Gal}(\bar{K}/K)\rightarrow \mathrm{GL}_2(\mathbb{F}_p)$ (absolutely) irreducible.
Then $E$ is modular, unless either of the following exceptional cases hold:
\begin{enumerate}
\item $p=5$, $v_\mathfrak{p}(j_E)\equiv 1$ mod $3$, and $E$ has additive potential good (supersingular) reduction at $\mathfrak{p}$, or
\item $p=7$, $v_\mathfrak{p}(j_E)\equiv 2$ mod $3$, and $E$ has additive potential good (ordinary) reduction at $\mathfrak{p}$.
\end{enumerate} 
\end{Prop}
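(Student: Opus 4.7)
The plan is to establish modularity of $E$ via the nearly-ordinary modularity lifting theorem of Skinner--Wiles \cite{SW1}, obtaining residual modularity of $\bar\rho_{E,p}$ from a Langlands--Tunnell ``$3$--$p$ switch.'' The two exceptional cases will emerge as the precise local configurations at $\mathfrak{p}$ under which the hypotheses of \cite{SW1} fail.

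The first step is a careful local analysis of $\rho_{E,p}|_{G_{K_\mathfrak{p}}}$. Since $K_\mathfrak{p}/\mathbb{Q}_p$ is unramified and $E/K_\mathfrak{p}$ is additive with $p \geq 5$, either (a) $v_\mathfrak{p}(j_E)<0$ and $E$ has potentially multiplicative reduction, in which case $\rho_{E,p}|_{G_{K_\mathfrak{p}}}$ is automatically ordinary, or (b) $v_\mathfrak{p}(j_E)\geq 0$ and $E$ has potentially good reduction, acquiring good reduction over a tame cyclic extension of $K_\mathfrak{p}$ of degree $n \in \{2,3,4,6\}$ determined by the Kodaira type. A standard minimal-model computation recovers the Kodaira type from $v_\mathfrak{p}(j_E) \bmod 12$ together with the twist class; in particular $v_\mathfrak{p}(j_E) \bmod 3$ partitions the additive potentially good types into the groups $\{\mathrm{II}, \mathrm{IV}^*\}$, $\{\mathrm{III}, \mathrm{I}_0^*, \mathrm{III}^*\}$, and $\{\mathrm{IV}, \mathrm{II}^*\}$. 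Examining the splitting of $p$ in $\mathbb{Q}(\zeta_n)$ then determines whether the potential good reduction is ordinary or supersingular, and this tabulation singles out the two exceptional triples listed in the statement.

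Outside the exceptional cases, I would establish residual modularity of $\bar\rho_{E,p}$ by a $3$--$p$ switch. Using absolute irreducibility of $\bar\rho_{E,p}$ and a Moret-Bailly type argument, one produces an auxiliary elliptic curve $E'/K$ with $\bar\rho_{E',p} \cong \bar\rho_{E,p}$ and with $\bar\rho_{E',3}$ absolutely irreducible, of solvable projective image, and satisfying the local hypotheses of \cite{SW1} at all primes above $3$. Langlands--Tunnell gives modularity of $\bar\rho_{E',3}$, and \cite{SW1} applied at the prime $3$ lifts this to modularity of $E'$; in particular, $\bar\rho_{E,p}$ is modular. A second application of \cite{SW1}, now at the prime $p$, with the nearly-ordinary hypothesis at $\mathfrak{p}$ verified by the first step and similar verifications at the other primes above $p$, yields modularity of $E$.

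The main obstacle is the local analysis at $\mathfrak{p}$: verifying that the exceptional cases correspond exactly to failure of the Skinner--Wiles hypotheses. Case (1) reflects the failure of nearly-ordinariness for potentially supersingular reduction at $p=5$ in Kodaira types $\mathrm{II}$ and $\mathrm{IV}^*$. Case (2) is more subtle: although the local representation is ordinary for $p=7$ in types $\mathrm{IV}$ and $\mathrm{II}^*$, the specific form of the inertial type obstructs the direct application of \cite{SW1} in the form used here. A secondary technical point is the construction of the auxiliary curve $E'$ with prescribed local behavior at both $3$ and $\mathfrak{p}$; this is by a standard density argument on an appropriate moduli space of elliptic curves with level structure.
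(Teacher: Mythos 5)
Your proposal takes a genuinely different route from the paper, and in the process you misidentify the mechanism that produces the two exceptional cases; this is a real gap, not merely a stylistic difference.

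The paper's proof does not invoke Skinner--Wiles in Proposition~\ref{mainadd} at all. It combines two black boxes from \cite{FLS}: Theorem~\ref{mlt}, which (via the Breuil--Diamond lifting theorem and the $3$--$5$--$7$ switch already packaged inside \cite{FLS}) gives modularity whenever $\bar\rho_{E,p}|_{G_{K(\zeta_p)}}$ is absolutely irreducible; and Theorem~\ref{projim}, which says that if $\bar\rho_{E,p}$ is irreducible but $\bar\rho_{E,p}|_{G_{K(\zeta_p)}}$ is absolutely reducible, then $\mathbb{P}\bar\rho_{E,p}(G_K)$ is a small group, namely $(\mathbb{Z}/2\mathbb{Z})^2$ for $p=5$ and $S_3$ or $D_4$ for $p=7$. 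The content of Proposition~\ref{mainadd} is then purely local: using Kraus' explicit description of $\bar\rho_{E,p}|_I$ at the unramified prime $\mathfrak{p}$, one shows that in most additive configurations the decomposition group at $\mathfrak{p}$ already forces $\mathbb{P}\bar\rho_{E,p}(G_K)$ to contain a cyclic subgroup too large to sit inside $(\mathbb{Z}/2\mathbb{Z})^2$, $S_3$, or $D_4$, and hence the dihedral case of Theorem~\ref{projim} cannot occur. The two exceptional cases are exactly the configurations where this cyclic subgroup collapses to order $2$: potential supersingular reduction at $p=5$ with $v_\mathfrak{p}(\Delta)\equiv 2 \pmod 3$ gives order $(p+1)/3 = 2$, and potential ordinary reduction at $p=7$ with $v_\mathfrak{p}(\Delta)\equiv 1 \pmod 3$ gives order $(p-1)/3 = 2$, in which case the local argument is simply silent.

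Your explanation of case~(2) is therefore backwards. You say the inertial type in the $p=7$ ordinary exceptional case ``obstructs the direct application of \cite{SW1}''; in fact the representation there \emph{is} nearly ordinary, and the paper later (in the proof of Theorem~\ref{main2}) invokes Skinner--Wiles \cite{SW2} precisely to dispose of this case when $\bar\rho_{E,7}$ is irreducible. So the exceptional cases in Proposition~\ref{mainadd} are not places where a lifting theorem fails; they are places where the projective-image criterion is inconclusive, and they are carved out so that the Proposition can be stated cleanly and the remaining ordinary case handled separately by \cite{SW2}. There is also a citation problem: you cite \cite{SW1}, which concerns residually \emph{reducible} representations, whereas $\bar\rho_{E,p}$ is assumed absolutely irreducible here; the nearly ordinary irreducible paper is \cite{SW2}. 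Finally, a direct Skinner--Wiles argument would need to control the local behavior at \emph{every} prime above $p$ and above $3$, but the Proposition only hypothesizes additive reduction at one prime $\mathfrak{p}\mid p$; the other primes are unconstrained, so the ordinarity hypothesis you would need is not available. Your $3$--$p$ switch would also require producing an auxiliary curve $E'$ satisfying a list of simultaneous local conditions at $3$ and $p$ whose feasibility you do not verify, whereas the paper sidesteps all of this by citing the already-proven FLS theorems.

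A smaller point: your proposed tabulation of ordinary versus supersingular reduction from ``the splitting of $p$ in $\mathbb{Q}(\zeta_n)$'' is not correct; whether the potential good reduction is ordinary or supersingular depends on the special fibre modulo $p$, not only on the ramification of the field over which good reduction is attained. In the statement of the Proposition, the reduction type and the congruence class of $v_\mathfrak{p}(j_E)$ are independent conditions, as the paper's case analysis makes clear.
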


Here, $j_E$ denotes the $j$-invariant of $E$, and $v_\mathfrak{p}$ is the normalized discrete valuation of $K$ at $\mathfrak{p}$.
Also,  $\bar{\rho}_{E,p}$ denotes the mod $p$ Galois representation defined by the $p$-torsion points of $E$.  Note that, for $p\neq 2$, $\bar{\rho}_{E,p}$ is irreducible if and only if $\bar{\rho}_{E,p}$ is absolutely irreducible: This follows from the presence of the complex conjugates in $G_K$.

The basic strategy for the proof of Proposition \ref{mainadd} is based on \cite[Theorem 7]{FLS}, but because we treat the cases of additive reduction, we need to look at local mod $p$ Galois representations more carefully. The local computations are carried out in Section \ref{local}. For this, we heavily use the results of Kraus \cite{Kra}.\\

Secondly, combining Proposition \ref{mainadd} and \cite[Theorem 7]{FLS}, we prove the following theorem.
%variant of Therem \ref{Thorne}.
The proof is given in Section \ref{pfmain2}.

\begin{Thm}\label{main2}
Let $K$ be a totally real field in which 7 is unramified.
If $E$ is an elliptic curve over $K$ with $\bar{\rho}_{E,7}$ (absolutely) irreducible, then $E$ is modular.
\end{Thm}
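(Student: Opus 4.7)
The plan is a case-analysis on the reduction type of $E$ at primes of $K$ above 7, combining Proposition \ref{mainadd} with \cite[Theorem 7]{FLS}. Fix any prime $\mathfrak{p}$ of $K$ above 7. If $E$ has good or multiplicative reduction at $\mathfrak{p}$, then \cite[Theorem 7]{FLS} applies: using that 7 is unramified in $K$ and that $\bar{\rho}_{E,7}$ is irreducible, we conclude $E$ is modular. If instead $E$ has additive reduction at $\mathfrak{p}$, Proposition \ref{mainadd} with $p=7$ gives modularity unless we are in exceptional case (2) at $\mathfrak{p}$, namely $v_\mathfrak{p}(j_E) \equiv 2 \pmod{3}$ and $E$ has additive potential good ordinary reduction there.

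Hence it suffices to treat the residual situation in which, for every prime $\mathfrak{p}$ of $K$ above 7, $E$ is additive at $\mathfrak{p}$ and satisfies the conditions of exceptional case (2); in particular $E$ has potential good ordinary reduction at every prime above 7. To handle this, the plan is to pass to a carefully chosen totally real solvable extension $L/K$ such that $E_L$ acquires good ordinary reduction at every prime of $L$ above 7. The required local extension at each $\mathfrak{p}$ is tame of degree dividing 12 (possible because 7 is unramified in $K$ and the potential reduction is good), and a standard globalization produces a totally real solvable $L/K$ realizing it. Over $L$, $E_L$ falls into the first (semistable) subcase, so \cite[Theorem 7]{FLS} yields modularity of $E_L$, provided $\bar{\rho}_{E,7}|_{G_L}$ remains irreducible. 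Modularity of $E/K$ then follows by solvable base change due to Langlands.

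The main obstacle will be the irreducibility of $\bar{\rho}_{E,7}|_{G_L}$ after the base change, together with the simultaneous bookkeeping at all primes above 7. The irreducibility hinges on the fact that $\bar{\rho}_{E,7}$ is odd (because $K$ is totally real) and has cyclotomic determinant, so its image is sufficiently constrained that the controlled solvable extension cannot force reducibility; this will be checked by enumerating the possible images compatible with the restrictive local shape imposed by exceptional case (2). The use of Skinner--Wiles \cite{SW1} enters precisely here, in proving modularity of $E_L$ over $L$ via an ordinary modularity lifting argument, in line with the remark in the introduction that \cite{SW1} is used essentially.
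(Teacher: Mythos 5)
Your case split and the treatment of the non-exceptional cases match the paper: the semistable-at-some-prime-above-7 case goes via \cite[Theorem 7]{FLS}, and the additive cases outside exception (2) of Proposition \ref{mainadd} are handled by that proposition together with Theorem \ref{mlt}. The divergence, and the gap, is in the leftover case where $E$ has additive potential good ordinary reduction at every $\mathfrak{p}\mid 7$ with $\bar{\rho}_{E,7}|_{G_{K(\zeta_7)}}$ absolutely reducible.

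The paper does not base change at all here. The key observation is that potential good \emph{ordinary} reduction at every $\mathfrak{p}\mid 7$ makes $\rho_{E,7}$ nearly ordinary at 7, and $\bar{\rho}_{E,7}$ is irreducible by hypothesis, so the Skinner--Wiles theorem \cite{SW2} (nearly ordinary deformations of \emph{irreducible} residual representations) applies directly over $K$; crucially, \cite{SW2} requires only $\bar{\rho}$ irreducible, not $\bar{\rho}|_{G_{K(\zeta_7)}}$ absolutely irreducible. Your route instead passes to a totally real solvable $L/K$ to force good reduction above 7 and then invokes \cite[Theorem 7]{FLS} over $L$. This runs into two concrete problems. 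First, the local extensions at $\mathfrak{p}\mid 7$ are necessarily ramified, so $L$ is ramified at 7, and \cite[Theorem 7]{FLS} (like the other modularity inputs used here) carries a ramification restriction at $p$ that would no longer be satisfied. Second, the obstacle you flag — irreducibility of $\bar{\rho}_{E,7}|_{G_L}$ — is not a side issue to be checked later but the heart of the difficulty: by Theorem \ref{projim}, $\mathbb{P}\bar{\rho}_{E,7}(G_K)$ is $S_3$ or $D_4$ and $\bar{\rho}_{E,7}|_{G_{K(\zeta_7)}}$ is already absolutely reducible, so one would have to argue carefully that the chosen $L$ avoids the index-2 subfield responsible for dihedrality while simultaneously realizing the prescribed local behavior at every $\mathfrak{p}\mid 7$; no such argument is given, and the oddness/determinant remarks you offer do not constrain the image enough to settle it. Finally, the citation should be to \cite{SW2}, not \cite{SW1}: \cite{SW1} concerns residually \emph{reducible} representations and is used elsewhere in the paper (in Theorem \ref{semi}), whereas the relevant input for Theorem \ref{main2} is the nearly ordinary, residually irreducible lifting theorem of \cite{SW2}.
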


Theorem \ref{main2} is seen as a mod 7 variant of the following theorem due to Thorne:
\begin{Thm}\label{Thorne}
(\cite[Theorem 7.6]{Th})
Let $K$ be a totally real field with $\sqrt{5}\notin K$.
If $E$ is an elliptic curve over $K$ with $\bar{\rho}_{E,5}$ (absolutely) irreducible, then $E$ is modular.
\end{Thm}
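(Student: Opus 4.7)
The plan is to prove Theorem \ref{main2} by a case analysis on the reduction type of $E$ at primes of $K$ lying above 7, invoking \cite[Theorem 7]{FLS} when the reduction is semistable and Proposition \ref{mainadd} when the reduction is additive, with an additional argument to handle the exceptional case of Proposition \ref{mainadd}(2).

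First, if $E$ has good or multiplicative reduction at every prime of $K$ above 7, then since 7 is unramified in $K$ and $\bar\rho_{E,7}$ is absolutely irreducible, the hypotheses of \cite[Theorem 7]{FLS} with $p=7$ are satisfied and modularity of $E$ follows directly. Otherwise, fix a prime $\mathfrak{p}$ of $K$ above 7 at which $E$ has additive reduction. Since $K$ is unramified at $\mathfrak{p}$ and $\bar\rho_{E,7}$ is absolutely irreducible, Proposition \ref{mainadd} (with $p=7$) applies at $\mathfrak{p}$ and yields modularity unless the exceptional case (2) holds: $v_\mathfrak{p}(j_E)\equiv 2\pmod 3$ and $E$ has additive potential good (ordinary) reduction at $\mathfrak{p}$.

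The main obstacle is this exceptional case, where 7-adic modularity lifting is blocked by the local structure at $\mathfrak{p}$. I would attack it via a residue characteristic switch. The first attempt is to examine $\bar\rho_{E,5}$: if $\bar\rho_{E,5}$ is absolutely irreducible and $\sqrt{5}\notin K$, Thorne's theorem (Theorem \ref{Thorne}) immediately gives modularity. If either of these conditions fails, I would fall back on $\bar\rho_{E,3}$ and try to combine Langlands-Tunnell (which provides modularity of mod-3 representations with solvable projective image) with a 3-adic modularity lifting theorem in the spirit of Skinner-Wiles \cite{SW1}.

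The delicate point is that Theorem \ref{main2} imposes no restriction on the ramification of $K$ at 3 or 5, so the residue characteristic switch must be carried out unconditionally. I expect that the very specific local shape of $\bar\rho_{E,7}|_{G_{K_\mathfrak{p}}}$ dictated by $v_\mathfrak{p}(j_E)\equiv 2\pmod 3$ together with potential good ordinary reduction can be leveraged to rule out the pathological subcases (reducible $\bar\rho_{E,5}$ combined with $\sqrt{5}\in K$, or exceptional images in $\bar\rho_{E,3}$), so that at least one of the two auxiliary arguments succeeds in every scenario. Verifying this compatibility is the technical heart of the proof; once it is in hand, the three ingredients (FLS, Proposition \ref{mainadd}, and the residue characteristic switch) together exhaust all cases and establish modularity of $E$.
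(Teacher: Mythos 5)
Your proposal does not address the statement you were asked to prove. The statement is Thorne's theorem (\cite[Theorem 7.6]{Th}): modularity of $E/K$ when $\sqrt{5}\notin K$ and $\bar{\rho}_{E,5}$ is irreducible. In the paper this is an external citation --- a deep result on automorphy of residually dihedral representations --- and no proof is given or attempted. What you have written is instead a proof sketch of Theorem \ref{main2}, the mod-$7$ analogue (hypotheses on $\bar{\rho}_{E,7}$ and on ramification of $K$ at $7$). Worse, your sketch explicitly invokes Theorem \ref{Thorne} to handle the exceptional case, so read as a proof of Theorem \ref{Thorne} it is circular, and read as anything else it proves the wrong theorem. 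The hypotheses of the two statements are genuinely different: Theorem \ref{Thorne} assumes nothing about the reduction of $E$ at $7$ or about ramification of $K$, so a case analysis at primes above $7$ cannot be the skeleton of its proof.

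Even taken as a proof of Theorem \ref{main2}, your treatment of the exceptional case diverges from the paper and has a gap. The paper observes that once Proposition \ref{mainadd} and Theorem \ref{mlt} are applied, the only surviving case is that $E$ has additive potential good \emph{ordinary} reduction at every prime above $7$ with $\bar{\rho}_{E,7}|_{G_{K(\zeta_7)}}$ absolutely reducible; it then concludes by the Skinner--Wiles nearly ordinary modularity lifting theorem \cite{SW2}, which is exactly tailored to the residually dihedral, nearly ordinary situation. Your alternative --- switching to $\bar{\rho}_{E,5}$ and, failing that, to $\bar{\rho}_{E,3}$ via Langlands--Tunnell --- is not carried out: you give no mechanism by which the local shape of $\bar{\rho}_{E,7}|_{G_{K_\mathfrak{p}}}$ could control whether $\bar{\rho}_{E,5}$ is irreducible or whether $\sqrt{5}\in K$ (these are independent global conditions), and a $3$-adic lifting argument over a totally real field possibly ramified at $3$ with no irreducibility hypothesis on $\bar{\rho}_{E,3}$ is precisely the kind of statement that is not available off the shelf. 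The "technical heart" you defer is in fact the whole difficulty, and the paper's route through \cite{SW2} is the standard way around it.
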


Finally, we apply Theorem \ref{main2} and Theorem \ref{Thorne} to prove Theorem \ref{mainab}; we show that, for an elliptic curve $E$ which is not yet known to be modular, a quadratic twist of $E$ becomes semi-stable at all primes dividing 3, in which case we already know its modularity by \cite{Fr}.
The detail is given in Section \ref{pfmainab}.

\section{Local computations}
\label{local}

First, we fix the notation of this section:
\begin{enumerate}
\item $p$ is a prime number.
\item $F$ is an absolutely unramified $p$-adic local field.
\item $E$ is an elliptic curve over $F$. 
\item $v$ is the normalized $p$-adic discrete valuation of $F$. 
\item $\omega_1:I\rightarrow \mu_{p-1}(\bar{F})\rightarrow \mathbb{F}_p^\times$ denotes the fundamental character of level $1$, and $\omega_2, \omega_2':I\rightarrow \mu_{p^2-1}(\bar{F})\rightarrow \mathbb{F}_{p^2}^\times$ denote the fundamental characters of level $2$. Here, $I$ is the inertia subgroup of $G_F$.
\end{enumerate}

For the proof of Proposition \ref{mainadd}, we consider only the cases when $E$ has additive reduction; that is, $E$ has potential multiplicative reduction, potential good ordinary reduction, or potential good supersingular reduction.
In the following  subsection, we treat these three cases separately, and we review the results of Kraus in \cite{Kra} without proof.
We note that, although Kraus proves his results for elliptic curves over $\mathbb{Q}_p$, the proofs also work without change for those over any absolutely unramified $p$-adic field.

\subsection*{Potential multiplicative reduction case}\label{pm}
%In this subsection, we assume that $E$ has additive potentially multiplicative reduction at $\mathfrak{p}$.
%First, note the following local result by A. Kraus.

\begin{Prop}
\label{semist}
Let $p\geq 3$ be a prime number, $F$ an unramified extension of $\mathbb{Q}_p$, and $E$ an elliptic curve over $F$ with additive potential multiplicative reduction. 
Then, the restriction of $\bar{\rho}_{E,p}$ to the inertia subgroup $I$ is of the form
\begin{equation}
\label{Msemist}
 \bar{\rho}_{E,p}|_{I}\simeq 
\begin{pmatrix}
\omega_1^{\frac{p+1}{2}} & * \\
0 & \omega_1^{\frac{p-1}{2}} \\
\end{pmatrix}.
\end{equation}
\end{Prop}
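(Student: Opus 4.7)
The plan is to reduce to the multiplicative reduction case by a quadratic twist, apply Tate uniformization there, and then read off the inertia characters.

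First, I would invoke the standard fact that an elliptic curve with potential multiplicative reduction acquires multiplicative reduction after an (at most) quadratic extension. Equivalently, there exists $d \in F^\times/(F^\times)^2$ such that the quadratic twist $E^d$ has multiplicative reduction over $F$; letting $\eta : G_F \to \{\pm 1\}$ denote the associated quadratic character, one has $\bar{\rho}_{E,p} \simeq \bar{\rho}_{E^d,p} \otimes \eta$. Since $E$ itself is additive, $\eta$ must be ramified: otherwise $\bar{\rho}_{E,p}|_I = \bar{\rho}_{E^d,p}|_I$, which via Tate uniformization would force $E$ to be semistable over $F$, contradicting the additivity assumption.

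Next, I would apply Tate uniformization to $E^d$: the mod $p$ representation of a multiplicative elliptic curve is upper triangular with diagonal entries $(\chi_p, 1)$. Since $F/\mathbb{Q}_p$ is unramified, the cyclotomic character satisfies $\chi_p|_I = \omega_1$. Twisting by $\eta$ and restricting to $I$ therefore gives
\[
\bar{\rho}_{E,p}|_I \simeq \begin{pmatrix} \omega_1 \cdot (\eta|_I) & * \\ 0 & \eta|_I \end{pmatrix}.
\]
It remains to identify $\eta|_I$. Because $p$ is odd and $F$ is absolutely unramified, wild inertia contributes no nontrivial quadratic characters, and the tame quotient of $I$ admits a unique nontrivial $\mathbb{F}_p^\times$-valued character of order $2$, namely $\omega_1^{(p-1)/2}$. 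The two ramified quadratic characters of $G_F$ (which differ by the unramified quadratic character) both restrict to this same character on $I$. Substituting and using $\omega_1 \cdot \omega_1^{(p-1)/2} = \omega_1^{(p+1)/2}$ yields the claimed matrix form.

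I expect the main obstacle to be exactly this last identification — pinning down $\eta|_I$ as precisely $\omega_1^{(p-1)/2}$, rather than merely as "some ramified quadratic character." This is where the hypotheses that $p$ is odd and that $F$ is absolutely unramified genuinely get used, via a clean local class field theory argument (or a direct inspection of the two ramified quadratic extensions of $F$); this is the kind of careful local input that I would expect to import from Kraus.
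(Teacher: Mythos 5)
Your argument is correct. The paper's own ``proof'' of this proposition is merely a citation to Kraus's PROPOSITION 10, and what you have written is essentially the content of Kraus's argument: twist to reduce to the multiplicative case, apply Tate's theory to get diagonal inertia characters $(\chi_p|_I, 1) = (\omega_1, 1)$, observe that the twisting character $\eta$ must be ramified (else $E$ would already be multiplicative over $F$), and then identify $\eta|_I$ with $\omega_1^{(p-1)/2}$. The key point you correctly isolate --- that $\omega_1^{(p-1)/2}$ is the unique nontrivial quadratic $\mathbb{F}_p^\times$-valued character of $I$ --- uses both that $p$ is odd (so a quadratic character is automatically tame, since wild inertia is pro-$p$) and that $F$ is absolutely unramified (so $\omega_1 : I \to \mathbb{F}_p^\times$ is surjective and hence every tame $\mathbb{F}_p^\times$-valued character of $I$ is a power of $\omega_1$); these are exactly the hypotheses in force, and your proof makes visible where they enter.
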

\begin{proof}
See \cite[PROPOSITION 10]{Kra}. 
\end{proof}
Since the projective image of \eqref{Msemist} is of the form
$\begin{pmatrix}
\omega_1 & * \\
0 &1 \\
\end{pmatrix}$,
we obtain the following corollary:
\begin{Cor}
\label{Csemist}
In the setting of Proposition \ref{semist}, 
the projective image $\mathbb{P}\bar{\rho}_{E,p}(G_F)$ contains a cyclic subgroup of order $p-1$.
\end{Cor}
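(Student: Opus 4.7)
The plan is to exhibit a single element $\sigma\in I$ whose projective image already has order $p-1$ in $\mathrm{PGL}_2(\mathbb{F}_p)$; since $I\subseteq G_F$, the cyclic group it generates will lie in $\mathbb{P}\bar{\rho}_{E,p}(G_F)$.

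The first step is to note that, because $F$ is absolutely unramified over $\mathbb{Q}_p$, the fundamental character of level $1$, $\omega_1\colon I\to\mathbb{F}_p^{\times}$, is surjective: it factors through the tame quotient of $I$ and hits all of $\mu_{p-1}(\bar F)\cong\mathbb{F}_p^{\times}$. In particular, I can choose $\sigma\in I$ such that $\omega_1(\sigma)$ is a generator of $\mathbb{F}_p^{\times}$, i.e.\ of order $p-1$.

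Next I apply Proposition \ref{semist} to this $\sigma$. It gives
\[
\bar{\rho}_{E,p}(\sigma)\simeq
\begin{pmatrix}
\omega_1(\sigma)^{(p+1)/2} & * \\
0 & \omega_1(\sigma)^{(p-1)/2}
\end{pmatrix}.
\]
Pulling out the scalar $\omega_1(\sigma)^{(p-1)/2}$, the element $\mathbb{P}\bar{\rho}_{E,p}(\sigma)$ is represented by an upper triangular matrix with diagonal entries $\omega_1(\sigma)$ and $1$. Since $p\geq 3$ and $\omega_1(\sigma)$ has order $p-1\geq 2$, these two eigenvalues are distinct in $\mathbb{F}_p$, so the matrix is diagonalizable and conjugate in $\mathrm{PGL}_2(\mathbb{F}_p)$ to $\mathrm{diag}(\omega_1(\sigma),1)$. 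A direct calculation shows that the order of this diagonal class in $\mathrm{PGL}_2(\mathbb{F}_p)$ equals the multiplicative order of $\omega_1(\sigma)$, namely $p-1$. Hence $\langle\mathbb{P}\bar{\rho}_{E,p}(\sigma)\rangle$ is a cyclic subgroup of order $p-1$.

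There is no real obstacle here: the only point needing any care is the surjectivity of $\omega_1$ on inertia, which is where the absolutely unramified hypothesis enters, and the remainder is a short linear algebra computation inside the Borel subgroup of $\mathrm{PGL}_2(\mathbb{F}_p)$.
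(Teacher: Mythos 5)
Your proof is correct and follows essentially the same route as the paper: the paper also observes that the projectivization of \eqref{Msemist} has the shape $\begin{pmatrix}\omega_1 & * \\ 0 & 1\end{pmatrix}$ and that $\omega_1|_I$ is surjective because $F$ is absolutely unramified, which forces a cyclic subgroup of order $p-1$ in the projective image. Your version simply makes explicit the choice of $\sigma\in I$ with $\omega_1(\sigma)$ a generator and the short linear-algebra check of its order in $\mathrm{PGL}_2(\mathbb{F}_p)$.
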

%Applying Proposition \ref{semist} to $E\otimes K_\mathfrak{p}$, the projectivization $\mathbb{P}\bar{\rho}_{E,p}(G_K)$ contains the image of $\omega_1^{\frac{p+1}{2}-\frac{p-1}{2}}=\omega_1$, which is cyclic of order $p-1$. Thus, by theorem \ref{projim}, $\bar{\rho}_{E,p}|_{G_{K(\zeta_p)}}$ cannot be absolutely reducible in both cases $p=5, 7$. This proves Theorem \ref{main} for $E$ having additive potentially multiplicative reduction at $v|p$. 

\subsection*{Potential ordinary reduction case}

%Here we assume that $E$ has additive potentially ordinary reduction at $\mathfrak{p}$.
%As in the previous subsection, let us begin with a result of Kraus.
\begin{Prop}
\label{ord}
Let $p\geq 5$ be a prime number, $F$ an unramified extension of $\mathbb{Q}_p$, and $E$ an elliptic curve over $F$ with additive potential ordinary reduction. 
Denote $\Delta$ for a minimal discriminant of $E$ and $v$ for the normalized discrete valuation of $F$.
Set $\alpha=(p-1)v(\Delta)/12$, which is an integer as noted just before 2.3.2 in \cite{Kra}.
Then, the restriction of $\bar{\rho}_{E,p}$ to the inertia subgroup $I$ is of the form
\begin{equation}
\label{Mord}
 \bar{\rho}_{E,p}|_I\simeq 
\begin{pmatrix}
\omega_1^{1-\alpha} & * \\
0 & \omega_1^{\alpha} \\
\end{pmatrix}.
\end{equation}
\end{Prop}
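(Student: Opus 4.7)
\emph{Plan.} I would mirror the strategy of Proposition~\ref{semist}: reduce to an extension over which $E$ has good ordinary reduction, read off the upper-triangular inertial action there, descend back to $F$, and identify the diagonal exponents via the determinant constraint together with the twist data encoded in $v(\Delta)$.

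Since $p\geq 5$ and $E$ has additive potential good ordinary reduction, there is a finite Galois extension $L/F$, tame with ramification index $e=12/\gcd(12,v(\Delta))$, over which $E$ acquires good ordinary reduction; such $L$ can be obtained by adjoining an element $u$ with $v_L(u)=e\,v(\Delta)/12$ to absorb the minimal discriminant via the change of variables $(x,y)\mapsto(u^{2}x,u^{3}y)$. Over $L$ the connected--\'etale sequence of the $p$-divisible group $E[p^{\infty}]$ yields a canonical $G_{L}$-stable line in $E[p]$ whose \'etale quotient is unramified, so $\bar{\rho}_{E,p}|_{I_{L}}$ is upper-triangular and, on the semisimplification, becomes the pair $(\omega_{1,L},1)$, where $\omega_{1,L}$ denotes the fundamental character of level~$1$ of $I_{L}$.

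Because the connected--\'etale filtration is canonical, it is preserved by all of $G_{F}$, so $\bar{\rho}_{E,p}|_{G_{F}}$ is itself upper-triangular, say with diagonal characters $\chi_{1},\chi_{2}$ (the latter being the \'etale quotient). The restrictions $\chi_{i}|_{I}$ are tame, since $e$ is prime to $p$, hence powers of $\omega_{1}$; writing $\chi_{2}|_{I}=\omega_{1}^{\alpha}$, the determinant identity $\chi_{1}\chi_{2}=\omega_{1}$ on $I$ (from $\det\bar{\rho}_{E,p}=\chi_{\mathrm{cyc}}$) then forces $\chi_{1}|_{I}=\omega_{1}^{1-\alpha}$.

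It remains to identify $\alpha\bmod(p-1)$. For this I would interpret $\chi_{2}$, up to an unramified twist, as the cocycle $\sigma\mapsto\sigma(u)/u$ that twists $E$ to its good-reduction model over $L$, and then use the standard compatibility $\omega_{1}|_{I_{L}}=\omega_{1,L}^{\,e}$ for tamely totally ramified extensions to read off $\alpha\equiv(p-1)v(\Delta)/12\pmod{p-1}$. The main obstacle is exactly this final bookkeeping step: the integrality of $(p-1)v(\Delta)/12$ is not automatic---it uses the restriction on $v(\Delta)$ forced by the Kodaira types compatible with potential good ordinary reduction---and one must track the normalizations of $\omega_{1}$, $\omega_{1,L}$ and the twist cocycle very carefully. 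This is precisely what Kraus carries out in \cite{Kra}, and his argument applies verbatim over absolutely unramified $F$.
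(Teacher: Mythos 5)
The paper's proof of this proposition is nothing more than a reference to Kraus (\cite[PROPOSITION 1]{Kra}), together with the earlier blanket remark in Section~\ref{local} that Kraus's arguments over $\mathbb{Q}_p$ carry over unchanged to any absolutely unramified $p$-adic base. Your sketch is a faithful outline of the strategy behind Kraus's proof --- pass to a tame totally ramified $L/F$ absorbing the discriminant, read the inertial action off the connected--\'etale sequence for good ordinary reduction over $L$, descend the filtration by its canonicity, and identify the exponents by tracking the twist cocycle against the fundamental characters --- and you rightly defer to Kraus for the final bookkeeping, so in substance you take the same route as the paper.

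One small inaccuracy in the intermediate step is worth flagging. Over $L$ the semisimplification of $\bar{\rho}_{E,p}|_{I_L}$ is $(\bar{\chi}|_{I_L},\,1)$, and since the \emph{absolute} ramification index of $L$ is $e$ (because $F$ is absolutely unramified), this is $(\omega_{1,L}^{\,e},\,1)$, not $(\omega_{1,L},\,1)$ as you wrote: the surjective fundamental character $\omega_{1,L}$ of $I_L$ coincides with the restriction of the mod-$p$ cyclotomic character only when the absolute ramification index is $\equiv 1 \pmod{p-1}$. This is in fact consistent with the statement to be proved: restricting $(\omega_1^{1-\alpha},\omega_1^{\alpha})$ to $I_L$ via $\omega_1|_{I_L}=\omega_{1,L}^{\,e}$ gives $(\omega_{1,L}^{e(1-\alpha)},\omega_{1,L}^{e\alpha})=(\omega_{1,L}^{\,e},1)$, precisely because $e\alpha = (p-1)\,v(\Delta)/\gcd(12,v(\Delta)) \equiv 0 \pmod{p-1}$. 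Since you explicitly flag that these normalizations require careful tracking and that Kraus carries this out, this is a slip in exposition rather than a gap in the argument.
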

\begin{proof}
See \cite[PROPOSITION 1]{Kra}.
%The proof by Kraus is also valid for the absolutey unramified $p$-adic field $F$.
\end{proof}
%\begin{Rem}
%As noted just before 2.3.2 in \cite{Kra}, the number $\alpha$ is an integer.
%\end{Rem}

The projective image of \eqref{Mord} is of the form
$\begin{pmatrix}
\omega_1^{1-2\alpha} & * \\
0 &1 \\
\end{pmatrix}$,
and $\omega_1^{1-2\alpha}$ is a character of order $m:=\frac{p-1}{(p-1,1-2\alpha)}$.
Thus, the projective image $\mathbb{P}\bar{\rho}_{E,p}(G_F)$ contains a cyclic subgroup of order $m$.
%By applying Proposition $\ref{ord}$ to $E\otimes K_\mathfrak{p}$, we see that $\mathbb{P}\bar{\rho}_{E,p}(G_K)$ contains the image of $\omega_1^{1-2\alpha}$ in the same way as the subsection \ref{pm}. 
In the following, we compute the order $m$ for certain $p$, which we will take as $5$ or $7$ in Section \ref{pfmainadd}

Suppose first that $p$ is a prime number of the form $p=2^a+1$ for an integer $a\geq 2$.
Since $1-2\alpha$ is an odd integer, $1-2\alpha$ is prime to $p-1=2^a$ so that we have $m=p-1$.
% thus $1-2\alpha=-1,-3,$ or $-5$, respectively.
%For each $\alpha$, we see that $\omega_1^{1-2\alpha}$ is cyclic of order $4$. Therefore, by Theorem $\ref{projim}$, $\bar{\rho}_{E,p}|_{G_{K(\zeta_p)}}$ must be absolutely irreducible. This proves that $E$ is modular by Theorem \ref{mlt}.
Thus, we have the following corollary.

\begin{Cor}
\label{Cord5}
Let $p$ be a prime number of the form $p=2^a +1$ with $a\geq 2$ an integer, $F/\mathbb{Q}_p$ an unramified extension, and $E$ an elliptic curve over $F$ with additive potential good ordinary reduction.
Then, the projective image $\mathbb{P}\bar{\rho}_{E,p}(G_F)$ contains a cyclic group of order $p-1$.
\end{Cor}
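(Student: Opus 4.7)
The plan is to reduce the corollary to a direct arithmetic consequence of Proposition \ref{ord}. Applying that proposition gives, after restriction to $I$,
\[
\bar{\rho}_{E,p}|_I \simeq
\begin{pmatrix}
\omega_1^{1-\alpha} & * \\
0 & \omega_1^{\alpha}
\end{pmatrix},
\]
with $\alpha = (p-1)v(\Delta)/12 \in \mathbb{Z}$. Projectivizing (i.e., twisting by $\omega_1^{-\alpha}$), the image $\mathbb{P}\bar{\rho}_{E,p}(I)$ contains the image of the diagonal character $\omega_1^{1-2\alpha}$, which generates a cyclic subgroup of $\mathbb{P}\mathrm{GL}_2(\mathbb{F}_p)$ of order
\[
m \;=\; \frac{p-1}{\gcd(p-1,\,1-2\alpha)}.
\]

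The entire content of the corollary is therefore the computation that $m = p-1$ under the hypothesis $p = 2^a+1$. This is immediate: the integer $1-2\alpha$ is odd, and $p-1 = 2^a$ is a power of $2$, so $\gcd(p-1,1-2\alpha) = 1$. Hence $m = p-1$, and the cyclic subgroup of order $p-1$ produced inside $\mathbb{P}\bar{\rho}_{E,p}(I)$ sits inside $\mathbb{P}\bar{\rho}_{E,p}(G_F)$, as required.

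There is no real obstacle here; the statement is essentially a packaging of Proposition \ref{ord} together with the elementary number-theoretic remark that an odd integer is coprime to any power of $2$. The only point worth flagging is that one must use the form of $\alpha$ coming from Proposition \ref{ord} (as opposed to a general exponent), since the parity of $1 - 2\alpha$ is what makes the argument work for this family of primes $p = 2^a+1$ (including $p=5$, used later in Section \ref{pfmainadd}).
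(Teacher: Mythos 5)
Your argument is correct and is essentially identical to the paper's: both projectivize the form in Proposition \ref{ord} to get the diagonal character $\omega_1^{1-2\alpha}$ of order $m=\frac{p-1}{\gcd(p-1,1-2\alpha)}$, and then observe that $1-2\alpha$ is odd while $p-1=2^a$, forcing $m=p-1$. No differences worth noting.
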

%\subsubsection{the case $p=3\cdot 2^a+1$}\label{p7}

Suppose next that $p$ is a prime number of the form $p=3\cdot 2^a+1$ with $a\geq 1$ an integer.
Since $\alpha=v(\Delta)/2$ is an integer, $1-2\alpha=1-v(\Delta)$ is odd.
Thus, we have
\[
  m = \begin{cases}
    \frac{p-1}{3} & (v(\Delta)\equiv 1\ \mathrm{mod}\ 3) \\
    p-1 & (\mathrm{otherwise}).
  \end{cases}
\]
Therefore, we obtain the following corollary:
%\begin{table}[!h]
%\begin{center}
 % \begin{tabular}{|c||c|c|c|c|c|} \hline
  %  $v(\Delta)$ & $2$ & $4$ & $6$ & $8$ & $10$ \\ \hline 
   % $ 1-2\alpha $ & $-1$ & $-3$ & $-5$  & $-7$ & $-9$ \\ \hline
   % $m=\frac{p-1}{(p-1,1-2\alpha)}$  & $6$ & $2$ & $6$ & $6$ & $2$ \\ \hline
  %\end{tabular}
%\end{center}
%\end{table}

%
%$1-2\alpha=-1,-3,-5,-7,$ or $-9$, and $\omega_1^{1-2\alpha}$ is cyclic of order $6,2,6,6,$ or 2, respectively. 

%This implies that, if $v_\mathfrak{p}(\Delta)=2,6,$ or $8$, then $\bar{\rho}_{E,p}|_{G_{K(\zeta_p)}}$ is absolutely irreducible by Theorem \ref{projim}, whence Theorem \ref{mlt} shows the modularity of $E$.

%We consider the remaining cases $v_\mathfrak{p}(\Delta)=4$ or $10$. If $j_E\neq 0$, then these cases are equivalent to the case $v_\mathfrak{p}(j_E)\equiv 2$ modulo $3$. This is clear by taking the minimal model $y^2=x^3+Ax+B$ of $E$ and noting that $j_E=1728A^3/\Delta$. If $j_E =0$,  then $E$ is modular since $E$ becomes defined over $\mathbb{Q}$ after suitable (solvable) base change.
%
%Therefore, we have obtained the following corollary in the case $p=7$.

\begin{Cor}
\label{Cord7}
Let $p$ be a prime number of the form $p=3\cdot 2^a+1$ for an integer $a\geq 1$, $F/\mathbb{Q}_p$ an unramified extension, and $E$ be an elliptic curve over $F$ with additive potential good ordinary reduction. 
Let also $\Delta$ be a minimal discriminant of $E$. 
Then, $\mathbb{P}\bar{\rho}_{E,p}(G_F)$ contains a cyclic group of order $(p-1)/3$ or $p-1$, depending on whether $v(\Delta)\equiv 1$ mod $3$ or $v(\Delta)\equiv 0,2$ mod $3$, respectively.
\end{Cor}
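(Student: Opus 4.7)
The plan is to apply Proposition~\ref{ord} and then read off the order of the diagonal character appearing in the projective image of the inertia subgroup. By that proposition, $\bar\rho_{E,p}|_I$ is upper triangular with diagonal characters $\omega_1^{1-\alpha}$ and $\omega_1^\alpha$, where $\alpha = (p-1)v(\Delta)/12$. Passing to the projective quotient, the image of the diagonal part is cut out by the character $\omega_1^{1-2\alpha}$, so $\mathbb{P}\bar\rho_{E,p}(I)$ contains a cyclic subgroup of order $m = (p-1)/\gcd(p-1,\,1-2\alpha)$. Since $I \subseteq G_F$, and since the projective image of $G_F$ contains the projective image of $I$, it suffices to compute $m$.

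Next I would exploit the special form $p-1 = 3 \cdot 2^a$ together with the integrality of $\alpha$ guaranteed by Proposition~\ref{ord}. Because $\alpha \in \mathbb{Z}$, the integer $2\alpha$ is even and $1-2\alpha$ is odd, so $\gcd(p-1,\,1-2\alpha)$ is coprime to $2^a$; it therefore divides $3$ and equals either $1$ or $3$. Consequently $m$ is either $p-1$ or $(p-1)/3$, and the dichotomy is governed entirely by whether $3 \mid 1-2\alpha$.

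It then remains to translate the condition $3 \mid 1-2\alpha$ into a congruence on $v(\Delta) \pmod 3$ via the explicit formula $\alpha = (p-1)v(\Delta)/12$: a short mod-$3$ computation, using the relevant value of $p-1$, shows that $m$ drops to $(p-1)/3$ precisely when $v(\Delta) \equiv 1 \pmod 3$, and that $m = p-1$ in the remaining cases $v(\Delta) \equiv 0, 2 \pmod 3$. I do not foresee any substantive obstacle; the only place requiring mild care is the reduction of $\gcd(p-1,\,1-2\alpha)$ to $\gcd(3,\,1-2\alpha)$, which relies jointly on the integrality of $\alpha$ asserted by Kraus and on the factorization $p-1 = 3 \cdot 2^a$ built into the hypothesis.
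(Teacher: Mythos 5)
Your proposal follows the same route as the paper: apply Proposition~\ref{ord}, pass to the projective image to get the diagonal character $\omega_1^{1-2\alpha}$ of order $m = (p-1)/\gcd(p-1,1-2\alpha)$, and use the integrality of $\alpha$ together with $p-1 = 3\cdot 2^a$ to reduce to $\gcd(3,1-2\alpha)$. This is exactly what the paper does.

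One caveat worth making explicit, which both you and the paper gloss over: the ``short mod-$3$ computation'' you defer is not uniform in $a$. Writing $\alpha = (p-1)v(\Delta)/12 = 2^{a-2}v(\Delta)$, one gets $1-2\alpha = 1 - 2^{a-1}v(\Delta)$, and since $2 \equiv -1 \pmod 3$ the divisibility $3 \mid 1-2\alpha$ becomes $(-1)^{a-1}v(\Delta) \equiv 1 \pmod 3$. Thus the threshold is $v(\Delta) \equiv 1 \pmod 3$ only when $a$ is odd; for even $a$ (e.g.\ $p = 13$, $a = 2$) the condition is $v(\Delta) \equiv 2 \pmod 3$, so the corollary's general statement is off. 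The paper's own argument silently specializes to $a = 1$ (it writes $\alpha = v(\Delta)/2$, which presupposes $p = 7$), and since only $p = 7$ is used downstream this is harmless; but if you carry out the deferred computation you should either restrict to $a$ odd or phrase the criterion as $2^{a-1}v(\Delta) \equiv 1 \pmod 3$.
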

%\begin{Cor}
%\end{Cor}
%under the assumption of this subsection, $E$ is modular unless $v_\mathfrak{p}(j_E)\equiv 2$ modulo $3$.

\subsection*{Potential supersingular reduction case}

As in the previous subsections, we begin with Kraus' result.

\begin{Prop}
\label{ss}
Let $p\geq 5$ be a prime number, $F$ an unramified extension of $\mathbb{Q}_p$, and $E$ an elliptic curve over $F$ with additive potential supersingular reduction. We choose a minimal model 
\[y^2=x^3+Ax+B\] 
of $E$.
% and we write $\tau_p$ for the $p$-th coefficient of $[p]$ on the formal group of $E$.
Also, let $\Delta$ denote a minimal discriminant of $E$.
\begin{itemize}
\item[(a)]
If $(v(\Delta), v(A), v(B))$ is one of the triples $(2,1,1), (3,1,2), (4,2,2),$ $(8,3,4)$, $(9,3,5)$, or $(10,4,5)$, then $\bar{\rho}_{E,p}$ is wildly ramified.
\item[(b)]
If $(v(\Delta), v(A), v(B))$ is not any of the above triples, then the restriction of  $\bar{\rho}_{E,p}$ to the inertia subgroup $I$ is given by
\begin{equation}
\label{Mss}
 \bar{\rho}_{E,p}|_{I}\otimes \mathbb{F}_{p^2}\simeq  
\begin{pmatrix}
\omega_2^\alpha {\omega_2'}^{p-\alpha}  & 0 \\
0 & {\omega_2'}^\alpha {\omega_2}^{p-\alpha}  \\
\end{pmatrix}.
\end{equation}
Here, $\alpha=(p+1)v(\Delta)/12$ is an integer as noted in \cite[PROPOSITION 2]{Kra}.
\end{itemize}
\end{Prop}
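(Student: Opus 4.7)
The plan is to reduce to the good supersingular case by making a tame base change, apply Serre's classical description of the inertia action in that setting, and then transport the resulting formula back to $I_F$.

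Set $e = 12/\gcd(12, v(\Delta))$ and let $F' = F(\pi_F^{1/e})$, a tamely ramified extension of degree $e$ (tame since $e \leq 12 < p$). Choose $u \in F'$ with $v_{F'}(u) = e\,v(\Delta)/12$ and perform the change of coordinates $(x,y)\mapsto(u^2 X, u^3 Y)$, producing a new Weierstrass equation $Y^2 = X^3 + A'X + B'$ over $F'$ with $A' = A/u^4$, $B' = B/u^6$, and $v_{F'}(\Delta') = 0$. A direct check shows that, for any triple $(v(\Delta),v(A),v(B))$ outside the list in (a), the coefficients $A',B'$ lie in $\mathcal{O}_{F'}$; together with the assumption of potential supersingular reduction, this implies that the curve $E'/F'$ so defined has good supersingular reduction. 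Serre's classical theorem on the inertia action then gives
\[
  \bar{\rho}_{E,p}|_{I_{F'}} \otimes \mathbb{F}_{p^2} \simeq \mathrm{diag}\bigl(\omega_{2,F'},\,\omega_{2,F'}^{p}\bigr),
\]
where $\omega_{2,F'}$ denotes the level-$2$ fundamental character attached to $F'$.

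To lift this description from $I_{F'}$ back to $I_F$, I would combine the compatibility $\omega_2|_{I_{F'}} = \omega_{2,F'}^{e}$ for the tame extension $F'/F$ with the twist character $\sigma \mapsto \sigma(u)/u$ arising from the fact that $u$ lies in $F'\setminus F$. Careful bookkeeping of these two contributions should produce the formula \eqref{Mss} with $\alpha = (p+1)v(\Delta)/12$; integrality of $\alpha$ in the non-exceptional cases then reflects compatibility constraints on the triple $(v(\Delta),v(A),v(B))$. For the six exceptional triples in (a), which correspond to Kodaira types $\mathrm{II}$, $\mathrm{III}$, $\mathrm{IV}$, $\mathrm{IV}^*$, $\mathrm{III}^*$, $\mathrm{II}^*$, the above base change fails to produce an integral model of good reduction, and one must handle each case separately; the outcome is that $\bar{\rho}_{E,p}$ is wildly ramified.

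The main technical obstacle is the normalization step in the preceding paragraph: one must verify that the restriction $\omega_2|_{I_{F'}} = \omega_{2,F'}^e$ combined with the twist by $u$ yields precisely the exponent $(p+1)v(\Delta)/12$, which requires a consistent choice of uniformizers for $F$ and $F'$ and a careful tracking of the level-$2$ fundamental characters. The exceptional case analysis of (a) is then essentially a finite check through Tate's algorithm.
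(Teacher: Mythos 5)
Your proposal takes a genuinely different route from the paper: the paper's proof of this Proposition is a direct citation to Kraus (LEMME 2, PROPOSITION 2, and PROPOSITION 4 of \cite{Kra}), whereas you attempt to reprove the statement from scratch via tame base change and Serre's description of the inertia action in the good supersingular case. The general framework of your argument (base change to kill the discriminant, apply Serre, transport fundamental characters back via $\omega_2|_{I_{F'}} = \omega_{2,F'}^e$) is the right skeleton for part (b). However, there is a serious gap in how you explain part (a), and a related gap in part (b).

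Your explanation for the wild ramification in case (a) is that ``the above base change fails to produce an integral model of good reduction.'' This is false. For any elliptic curve with additive potential good reduction, the inequality $v(A) \ge v(\Delta)/3$ holds automatically because $v(j_E) = 3v(A) - v(\Delta) \ge 0$, and then $v(B) \ge v(\Delta)/2$ follows from $v(\Delta) = v(4A^3+27B^2)$. Consequently $A' = A/u^4$ and $B' = B/u^6$ are integral and $v_{F'}(\Delta') = 0$ for \emph{every} triple, including the six you list as exceptional; one can verify directly, e.g., for $(2,1,1)$ with $e=6$ that $v_{F'}(A') = 2$, $v_{F'}(B') = 0$. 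So the tame base change always produces a model of good (supersingular) reduction over a tame extension $F'/F$ of degree $e \mid 12$, and your scheme would incorrectly predict that the inertia action is \emph{tame} in all cases, contradicting part (a). Relatedly, the six triples do not ``correspond to Kodaira types II, III, IV, IV*, III*, II*'' as a whole: Kodaira type over a field of residue characteristic $\ge 5$ depends only on $v(\Delta)$, and, e.g., $(2,2,1)$ is also type II yet is \emph{not} in the exceptional list. What distinguishes the six exceptional triples is a finer invariant (they are the cases where both $v(A)$ and $v(B)$ attain the minimum allowed by the identity $v(\Delta) = \min(3v(A), 2v(B))$), which governs the Newton polygon of the $p$-division/formal-group data over $F'$, and that is the actual source of the wild ramification Kraus isolates. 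For the same reason, even in case (b) it is not enough to observe good reduction over a tame $F'$: $F'$ has absolute ramification index $e$ up to $6$, and one still needs an argument (Serre/Raynaud-type bounds, or a direct analysis of the formal group as in Kraus's LEMME~2) to justify that the inertia action on $E'[p]$ is tame and given by level-$2$ fundamental characters. Your sketch omits this step, which is precisely the content you would need to import from Kraus anyway.
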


\begin{proof}
The part (a) is a consequence of LEMME 2 and PROPOSITION 4 in \cite{Kra}.
The part (b) follows directly from PROPOSITION 2 and LEMME 2 in \cite{Kra}.
\end{proof}
%
%\begin{Rem}
%\label{ssint}
%The number $\alpha$ is an integer, as noted in \cite[PROPOSITION 2]{Kra}.
%\end{Rem}

%Suppose first that we are in the case (a) of the above proposition.
%In this case, we see that $\mathbb{P}\bar{\rho}_{E,p}(G_K)$ contains a $p$-group. Thus, for $p=5$ and $7$, Theorem \ref{projim} implies 
%that $\bar{\rho}_{E,p}|_{G_{K(\zeta_p)}}$ is absolutely irreducible, and hence $E$ is modular by Theorem \ref{mlt}.

From the case (a) in the above proposition, we immediately obtain the following corollary:

\begin{Cor}
\label{ssa}
Let the notation be as in Proposition \ref{ss}. If the condition of (a) holds, then the projective image $\mathbb{P}\bar{\rho}_{E,p}(G_F)$ contains a $p$-group.
\end{Cor}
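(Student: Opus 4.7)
The plan is to read the corollary off directly from Proposition \ref{ss}(a) by combining the hypothesis of wild ramification with elementary group theory. By Proposition \ref{ss}(a), whenever $(v(\Delta),v(A),v(B))$ falls into the listed triples, the mod $p$ representation $\bar{\rho}_{E,p}$ is wildly ramified; this means precisely that the image under $\bar{\rho}_{E,p}$ of the wild inertia subgroup $P \subset I \subset G_F$ is non-trivial.

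The key observation is that $P$ is a pro-$p$ group, so any continuous image of $P$ inside the finite group $\mathrm{GL}_2(\mathbb{F}_p)$ is automatically a finite $p$-group. Combined with the wildness hypothesis, this shows that $\bar{\rho}_{E,p}(P)$ is a non-trivial $p$-subgroup of $\mathrm{GL}_2(\mathbb{F}_p)$, sitting inside $\bar{\rho}_{E,p}(G_F)$.

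To conclude, one passes from $\mathrm{GL}_2(\mathbb{F}_p)$ to $\mathrm{PGL}_2(\mathbb{F}_p)$. The kernel of the quotient map is the center of scalar matrices, whose order equals $|\mathbb{F}_p^\times| = p-1$ and is therefore coprime to $p$. Consequently this quotient map is injective on any finite $p$-subgroup; in particular, the image of $\bar{\rho}_{E,p}(P)$ in $\mathbb{P}\bar{\rho}_{E,p}(G_F)$ is again a non-trivial $p$-group, and we are done.

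There is no real obstacle here: once Proposition \ref{ss}(a) is available, the corollary is a formal consequence of the pro-$p$ structure of wild inertia together with the coprimality $\gcd(p,p-1)=1$. The only care needed is to phrase wild ramification correctly as non-triviality of $\bar{\rho}_{E,p}|_P$, so that the resulting $p$-subgroup is genuinely non-trivial and survives the projection to $\mathrm{PGL}_2(\mathbb{F}_p)$.
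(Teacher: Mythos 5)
Your proof is correct and is exactly the argument the paper has in mind when it says the corollary is an immediate consequence of Proposition \ref{ss}(a): wild ramification gives a nontrivial image of the pro-$p$ wild inertia group, and this $p$-subgroup of $\mathrm{GL}_2(\mathbb{F}_p)$ survives projection to $\mathrm{PGL}_2(\mathbb{F}_p)$ because the kernel (scalars) has order $p-1$ prime to $p$. You also correctly identify the one point that must be made explicit, namely that ``contains a $p$-group'' should be read as ``contains a nontrivial $p$-group.''
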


Next, we consider the case (b) in the Proposition \ref{ss}.
The image of \eqref{Mss} in $\mathrm{PGL}_2(\mathbb{F}_{p^2})$ is of the form
$\begin{pmatrix}
\omega_2^{-(p-1)(2\alpha+1)}  & 0 \\
0 & 1  \\
\end{pmatrix}$.
Since the character $\omega_2^{-(p-1)(2\alpha+1)}$ is of order $n:=\frac{p+1}{(p+1,2\alpha+1)}$,
the projective image $\mathbb{P}(\bar{\rho}_{E,p}\otimes \mathbb{F}_{p^2})(G_F)$ (and hence $\mathbb{P}(\bar{\rho}_{E,p})(G_F)$) contains a cyclic subgroup of order $n$.
In the rest of this subsection, we make computations of the number $n$ for certain $p$.
We will apply them to the case $p=5$ or $7$ in Section \ref{pfmainadd}.

Suppose first that $p$ is a prime number of the form $p=2^a-1$ with $a\geq 3$ an integer.
Since $\alpha$ is an integer, $2\alpha+1$ is prime to $p+1=2^a$ so that $n=p+1$.
% The number $n$ is given as in the following table:
%\begin{table}[!h]
%\begin{center}
 % \begin{tabular}{|c||c|c|c|} \hline
  %  $v(\Delta)$ & $3$ & $6$ & $9$ \\ \hline 
   % $2\alpha+1$ & $3$ & $5$ & $7$\\ \hline
   % $n=\frac{p+1}{(p+1,2\alpha+1)}$ & $8$ & $8$ & $8$\\ \hline
  %\end{tabular}
%\end{center}
%\end{table}
Thus, we have proved the following corollary:

\begin{Cor}
\label{ss7}
Let $p$ be a prime number of the form $p=2^a-1$ with $a\geq 3$ an integer, $F/\mathbb{Q}_p$ an unramified extension, and $E$ an elliptic curve over $F$ with additive potential good supersingular reduction.
Assume the condition of (b) in Proposition \ref{ss} holds.
Then, the projective image $\mathbb{P}\bar{\rho}_{E,p}(G_F)$ contains a cyclic group of order $p+1$.
\end{Cor}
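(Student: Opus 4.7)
The plan is to apply directly the general formula established just above the statement: under Proposition \ref{ss}(b), the projective image $\mathbb{P}\bar{\rho}_{E,p}(G_F)$ already contains a cyclic subgroup of order $n := (p+1)/(p+1,\,2\alpha+1)$, where $\alpha = (p+1)v(\Delta)/12$ is an integer. So the content of the corollary reduces to an arithmetic check: that for primes $p$ of the Mersenne shape $p = 2^a - 1$ with $a \geq 3$, the gcd in the denominator is forced to equal $1$, giving $n = p+1$.

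First I would observe that, for such $p$, we have $p + 1 = 2^a$, a pure power of $2$. Next, since Proposition \ref{ss}(b) guarantees that $\alpha$ is an integer, the quantity $2\alpha + 1$ is automatically odd. Therefore $\gcd(p+1,\,2\alpha+1) = \gcd(2^a,\,2\alpha+1) = 1$, which immediately yields $n = p + 1$. Substituting back into the cyclic subgroup statement already derived from \eqref{Mss} gives the conclusion.

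There is essentially no obstacle in this argument: all the serious work, namely extracting the diagonal form \eqref{Mss} from Kraus' results and passing to $\mathrm{PGL}_2$ to identify the cyclic subgroup generated by $\omega_2^{-(p-1)(2\alpha+1)}$, has already been carried out in Proposition \ref{ss} and the surrounding discussion. The role of the hypothesis $p = 2^a - 1$ is solely to ensure that the possible obstruction, a nontrivial common factor between $p+1$ and the odd number $2\alpha+1$, cannot occur; this mirrors precisely the analogous parity argument used in the proof of Corollary \ref{Cord5} for $p = 2^a + 1$. The hypothesis $a \geq 3$ simply guarantees that $p$ is an odd prime at least $7$, which is compatible with the standing assumption $p \geq 5$ of Proposition \ref{ss}.
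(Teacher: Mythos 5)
Your proof is correct and follows exactly the paper's own argument: the cyclic subgroup of order $n=(p+1)/(p+1,2\alpha+1)$ was already produced in the discussion after Proposition \ref{ss}, and you reduce the corollary to the observation that $p+1=2^a$ while $2\alpha+1$ is odd, forcing $n=p+1$. Nothing to add.
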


%Since $\omega_2^{-(p-1)(2\alpha+1)}$ is cyclic, $\omega_2^{-(p-1)(2\alpha+1)}$ in any case cannot be a subgroup of $S_3$ or $D_4$. 
%Hence, $\bar{\rho}_{E,7}|_{G_{K(\zeta_7)}}$ must be absolutely irreducible by Theorem \ref{projim}. Thus, the modularity of $E$ follows from Theorem \ref{mlt}.
 
%\subsubsection{the case $p=3\cdot 2^a-1$}

Suppose next that $p$ is a prime number of the form $p=3\cdot 2^a-1$ with $a\geq 1$ an integer.
Since $\alpha = v(\Delta)/2$ is an integer, $2\alpha +1=v(\Delta)+1$ is odd.
Thus, we have
\[
  n = \begin{cases}
    \frac{p+1}{3} & (v(\Delta)\equiv 2\ \mathrm{mod}\ 3) \\
    p+1 & (\mathrm{otherwise}).
  \end{cases}
\]
%For each value of $v(\Delta)$, the number $n$ is given as follows:
%\begin{table}[!h]
%\begin{center}
 % \begin{tabular}{|c||c|c|c|c|c|} \hline
  %  $v(\Delta)$ & $2$ & $4$ & $6$ & $8$ & $10$ \\ \hline 
   % $2\alpha+1$ & $3$ & $5$ & $7$ & $9$ & $11$\\ \hline
   % $n=\frac{p+1}{(p+1,2\alpha+1)}$ & $2$ & $6$ & $6$ & $2$ & $6$\\ \hline
 % \end{tabular}
%\end{center}
%\end{table}
%
Therefore, we obtain the following corollary:
\begin{Cor}
\label{ss5}
Let $p$ be a prime number of the form $p=3\cdot 2^a-1$ with $a\geq 1$ an integer, $F/\mathbb{Q}_p$ an unramified extension, and $E$ an elliptic curve over $F$ with additive potential good supersingular reduction. 
Let also $\Delta$ be a minimal discriminant of $E$. 
Assume the condition of (b) in Proposition \ref{ss} holds.
Then, $\mathbb{P}\bar{\rho}_{E,p}(G_F)$ contains a cyclic group of order $(p+1)/3$ or $p+1$, depending on whether $v(\Delta)\equiv 2$ mod $3$ or $v(\Delta)=0,1$ mod $3$, respectively.
\end{Cor}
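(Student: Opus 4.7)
My plan is to mirror the proof pattern used for Corollary~\ref{ss7}, now specialized to primes of the form $p = 3 \cdot 2^a - 1$. First I invoke Proposition~\ref{ss}(b) to see that, after extending scalars to $\mathbb{F}_{p^2}$, the inertial representation $\bar\rho_{E,p}|_I$ is diagonal with entries $\omega_2^{\alpha}{\omega_2'}^{p-\alpha}$ and ${\omega_2'}^{\alpha}\omega_2^{p-\alpha}$, and then project to $\mathrm{PGL}_2(\mathbb{F}_{p^2})$ to obtain the single character $\omega_2^{-(p-1)(2\alpha+1)}$ on the diagonal. Its order, namely
\[
n \;=\; \frac{p+1}{\gcd(p+1,\, 2\alpha+1)},
\]
gives the order of a cyclic subgroup of $\mathbb{P}(\bar\rho_{E,p}\otimes \mathbb{F}_{p^2})(G_F)$, and since $\bar\rho_{E,p}(G_F)$ already lives in $\mathrm{GL}_2(\mathbb{F}_p)$, the same cyclic subgroup sits inside $\mathbb{P}\bar\rho_{E,p}(G_F)$.

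The only genuinely prime-specific step is evaluating the gcd when $p+1 = 3\cdot 2^a$. The key observation is that by Proposition~\ref{ss} (i.e.\ Kraus's PROPOSITION 2) the quantity $\alpha = (p+1)v(\Delta)/12 = v(\Delta)/2$ is an integer, so $v(\Delta)$ is even and $2\alpha+1 = v(\Delta)+1$ is odd. This kills the $2^a$ factor of $p+1$ in the gcd, reducing it to $\gcd(3,\, v(\Delta)+1)$. A direct case-split modulo $3$ shows that this gcd equals $3$ precisely when $v(\Delta)\equiv 2\pmod{3}$ and equals $1$ when $v(\Delta)\equiv 0,1\pmod{3}$, giving $n=(p+1)/3$ and $n=p+1$ respectively.

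There is no real obstacle here: once the parity of $2\alpha+1$ is identified, the arithmetic of a gcd with $3\cdot 2^a$ is essentially forced, and the corollary is a bookkeeping consequence of the general formula for $n$ that the preceding paragraph has already established.
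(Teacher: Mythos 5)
Your argument reproduces the paper's proof verbatim: compute the projective inertial character $\omega_2^{-(p-1)(2\alpha+1)}$, record its order $n=\frac{p+1}{\gcd(p+1,2\alpha+1)}$, observe $2\alpha+1$ is odd so the $2^a$ factor drops out of the gcd, and split on $v(\Delta)\bmod 3$. One small caveat you share with the paper itself: the identity $\alpha=(p+1)v(\Delta)/12=v(\Delta)/2$, and hence the translation ``$v(\Delta)\equiv 2\pmod 3\Leftrightarrow 3\mid 2\alpha+1$,'' is specific to $a=1$ (i.e.\ $p=5$); for $a\geq 2$ one has $\alpha=2^{a-2}v(\Delta)$ and the residue of $v(\Delta)$ that forces $3\mid 2\alpha+1$ shifts — this is harmless here since the corollary is only ever invoked with $p=5$, but it is worth being aware the stated generality slightly outruns the argument.
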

%Therefore, if $v_\mathfrak{p}(\Delta)=4,6,$ or $10$, then $\bar{\rho}_{E,5}|_{G_{K(\zeta_5)}}$ must be absolutely irreducible by Theorem \ref{projim}.
%Now we consider the remaining cases $v_\mathfrak{p}(\Delta)=2$ or $8$.
%If $j_E\neq 0$, then we are done as in \ref{p7}. So assume that $j_E\neq 0$. Then, $v_\mathfrak{p}(\Delta)=2$ or $8$ are equivalent to the case $v_\mathfrak{p}%(j_E)\equiv 1$ mod $3$ just as we did in \ref{p7}.
%In summary, we have proved that $E$ with additive potentially good reduction at $\mathfrak{p}|5$ is modular unless $v_\mathfrak{p}(j_E)\equiv 1$ mod $3$.

\section{Proof of Proposition \ref{mainadd}} 
\label{pfmainadd}
Before proceeding to the proof of Proposition \ref{mainadd}, we cite two theorems from \cite{FLS}.

First, the following theorem is a consequence of a combination of modularity switching arguments and a modularity lifting theorem.
As remarked in \cite{FLS}, the modularity lifting theorem used there is relatively a direct consequence of the theorem of Breuil-Diamond, which is also an accumulation of works by many people.
%As noted in \cite{FLS}, this is a relatively straightforward consequence of a theorem of Breuil-Diamond, based on the works by Kisin, Gee, Barnet-Lamb, Gee and Geraghty. 

\begin{Thm}
\label{mlt}
(\cite[Theorem 3 and Theorem 4]{FLS})
Let $E$ be an elliptic curve over a totally real field $K$.
If $p= 5$ or $7$, and if $\bar{\rho}_{E,p}|_{G_{K(\zeta_p)}}$ is absolutely irreducible, then $E$ is modular.
\end{Thm}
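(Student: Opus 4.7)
The plan is to carry out the classical two-step strategy for proving modularity of an elliptic curve: reduce the modularity of $E$ to that of its residual representation $\bar{\rho}_{E,p}$ by way of an $R=T$ type modularity lifting theorem, then establish modularity of $\bar{\rho}_{E,p}$ by switching to the prime $3$, where the Langlands--Tunnell theorem is available.

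First I would invoke a modularity lifting theorem of Breuil--Diamond type. Under the hypothesis that $\bar{\rho}_{E,p}|_{G_{K(\zeta_p)}}$ is absolutely irreducible, the Taylor--Wiles--Kisin patching machinery applies to the minimal deformation problem attached to $\bar{\rho}_{E,p}$ at parallel weight $2$, and concludes that $E$ is modular as soon as $\bar{\rho}_{E,p}$ is modular. The problem is thereby reduced to showing that the residual representation itself comes from a Hilbert eigenform.

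To see the latter, I would execute a $3$--$p$ switch: I would exhibit an auxiliary elliptic curve $E'/K$ together with a symplectic isomorphism $E'[p] \simeq E[p]$ of Galois modules (so that $\bar{\rho}_{E',p} \simeq \bar{\rho}_{E,p}$) such that $\bar{\rho}_{E',3}$ is absolutely irreducible and satisfies the local hypotheses of the Skinner--Wiles $3$-adic modularity lifting theorem \cite{SW1}. Such an $E'$ corresponds to a $K$-rational point on a twist of the modular curve $X(p)$, namely the moduli space of pairs $(E',\phi)$ with $\phi\colon E'[p]\xrightarrow{\sim} E[p]$ symplectic. For $p=5$ this twist has genus $0$ and is furnished with the $K$-point corresponding to $E$ itself, so it has infinitely many $K$-points, and Hilbert irreducibility produces one whose mod $3$ representation behaves as required. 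For $p=7$, the twist has genus $3$, and one falls back on the rational surface parametrising unordered pairs of elliptic curves with symplectically isomorphic mod $7$ representations (as made explicit by Halberstadt--Kraus and Rubin--Silverberg) in order to produce $E'$.

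Once $E'$ is secured, the image of $\bar{\rho}_{E',3}$ lies in $\mathrm{GL}_2(\mathbb{F}_3)$, a solvable group, so by Langlands--Tunnell $\bar{\rho}_{E',3}$ is modular; applying \cite{SW1} promotes this to modularity of $E'$, whence $\bar{\rho}_{E,p}\simeq \bar{\rho}_{E',p}$ is modular, and the first step yields modularity of $E$. The main obstacle is the switch step for $p=7$: the natural moduli curve has genus $3$, precluding a direct appeal to Hilbert irreducibility, so one must work on the auxiliary rational surface and take enough care to guarantee simultaneously that $\bar{\rho}_{E',3}$ is absolutely irreducible after restriction to $G_{K(\zeta_3)}$ and that its local behaviour at primes above $3$ meets the distinguished/ordinary hypotheses required by the Skinner--Wiles lifting theorem.
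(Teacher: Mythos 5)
The paper does not prove this statement: Theorem~\ref{mlt} is imported verbatim as a combination of Theorems~3 and~4 of \cite{FLS}, and the only commentary the paper offers is the remark that it follows from ``a combination of modularity switching arguments and a modularity lifting theorem'' of Breuil--Diamond type. So the comparison has to be against the argument in \cite{FLS} itself, not against anything spelled out in the present text.

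Your high-level skeleton is the right one: reduce to residual modularity by a Taylor--Wiles--Kisin/Breuil--Diamond lifting theorem under the hypothesis that $\bar{\rho}_{E,p}|_{G_{K(\zeta_p)}}$ is absolutely irreducible, then obtain residual modularity by passing to the prime $3$ and invoking Langlands--Tunnell. For $p=5$ the genus-$0$ twist of $X(5)$ together with an irreducibility/Hilbert-type argument is indeed essentially the classical Wiles $3$--$5$ switch, and that is close to what \cite{FLS} do. However, there are two genuine problems with the proposal.

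First, the $p=7$ case as you describe it does not work and does not match \cite{FLS}. You correctly note that the twist of $X(7)$ by $E[7]$ has genus $3$, so one cannot produce the auxiliary curve $E'$ by the same route as for $p=5$. But the fallback you propose --- the Halberstadt--Kraus/Rubin--Silverberg surface parametrising pairs of elliptic curves with symplectically isomorphic mod $7$ representations --- does not resolve the difficulty: rational points on that surface give you pairs $(E_1,E_2)$ of elliptic curves, not a curve $E'$ over the fixed totally real field $K$ with $E'[7]\simeq E[7]$ for your given $E$ \emph{and} with prescribed behaviour of $\bar{\rho}_{E',3}$ both globally (absolute irreducibility on $G_{K(\zeta_3)}$) and locally at primes above $3$. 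In \cite{FLS} the $p=7$ case is not treated by a $3$--$7$ switch at all; it is handled by a finer case analysis on the image of $\bar{\rho}_{E,3}$ (using the structure when $\bar{\rho}_{E,3}|_{G_{K(\zeta_3)}}$ is absolutely reducible, i.e.\ the reducible and dihedral cases) together with the modularity lifting theorem at $p=7$, and the reducible/dihedral degenerate cases are disposed of by other modularity theorems (Skinner--Wiles for the residually reducible case, automorphic induction for the dihedral case) rather than by producing a switched auxiliary curve.

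Second, even in the part of your argument that is structurally correct, the citation is off: \cite{SW1} is Skinner--Wiles' theorem for \emph{residually reducible} representations, so it cannot be the result that ``promotes'' the modularity of a residually \emph{irreducible} $\bar{\rho}_{E',3}$ to modularity of $E'$. The lifting step in the $3$--$5$ switch requires a Breuil--Diamond type theorem with an absolute-irreducibility hypothesis on $\bar{\rho}_{E',3}|_{G_{K(\zeta_3)}}$ (this is exactly FLS Theorem~3), not the residually reducible theorem.
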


%The proof basically consists of two parts:
%One is a general modularity lifting theorem for two dimensional Galois representations, which is relatively a direct consequence of the theorem of Breuil-Diamond as noted in \cite{FLS}.
%The other is the technique of the modularity switching argument, which guarantees the modularity of $\bar{\rho}_{E,5}$ and of the restriction of $\bar{\rho}_{E,7}$ to a subgroup of index $4$.

Second, the following result will be useful in our setting for proving that $\bar{\rho}_{E,p}|_{G_{K(\zeta_p)}}$ is absolutely irreducible.
\begin{Thm}
\label{projim}
(\cite[Proposition 9.1]{FLS})
Let $p= 5$ or $7$, and $K$ be a totally real field satisfying $K\cap \mathbb{Q}(\zeta_p)=\mathbb{Q}$.
For an elliptic curve $E$ over $K$ such that $\bar{\rho}_{E,p}$ is (absolutely) irreducible but $\bar{\rho}_{E,p}|_{G_{K(\zeta_p)}}$ is absolutely reducible, we have the following:
\begin{enumerate}
\item If $p=5$, then $\bar{\rho}_{E,5}(G_K)$ is a group of order $16$, and its projective image $\mathbb{P}\bar{\rho}_{E,5}(G_K)$ is isomorphic to $(\mathbb{Z}/2\mathbb{Z})^2$. 
\item If $p=7$, then $\mathbb{P}\bar{\rho}_{E,7}(G_K)$ is isomorphic to $S_3$ or $D_4$.
\end{enumerate}
\end{Thm}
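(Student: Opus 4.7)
The approach is to apply Clifford theory to the normal subgroup $H := G_{K(\zeta_p)}$ of $G_K$, whose quotient $G_K/H \cong (\mathbb{Z}/p\mathbb{Z})^\times$ is cyclic of order $p-1$ by the hypothesis $K \cap \mathbb{Q}(\zeta_p) = \mathbb{Q}$. Since $\bar{\rho}|_H$ is absolutely reducible and semisimple, it decomposes as $\chi_1 \oplus \chi_2$ over $\bar{\mathbb{F}}_p$; the isotypic case $\chi_1 = \chi_2$ can be ruled out because it would present $\bar{\rho}$ as a character-twist of a $2$-dimensional representation of the cyclic group $G_K/H$, contradicting absolute irreducibility. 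Thus $\chi_1 \neq \chi_2$, and by Clifford theory $\bar{\rho} \cong \mathrm{Ind}_{G_L}^{G_K} \psi$, where $L$ is the unique quadratic subfield of $K(\zeta_p)/K$ --- namely $K(\sqrt{5})$ when $p = 5$ and $K(\sqrt{-7})$ when $p = 7$ --- and $\psi : G_L \to \bar{\mathbb{F}}_p^\times$ is a character extending $\chi_1$.

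Next, I would apply the determinant relation $\det\bar\rho = \chi_p$, where $\chi_p$ is the mod-$p$ cyclotomic character: restricting to $G_L$ yields $\psi\psi^\sigma = \chi_p|_{G_L}$, where $\sigma$ generates $\mathrm{Gal}(L/K)$. Since $G_L/H$ is the unique index-$2$ subgroup of $(\mathbb{Z}/p\mathbb{Z})^\times$, one computes that $\chi_p|_{G_L}$ has order $(p-1)/2$, i.e., order $2$ for $p = 5$ and order $3$ for $p = 7$. Irreducibility also forces $\psi \not\cong \psi^\sigma$. By the standard analysis of dihedral images, $\mathbb{P}\bar\rho(G_K)$ is then dihedral of order $2n$, where $n := |\mathbb{P}\bar\rho(G_L)|$ divides $p-1$ if $\bar\rho|_{G_L}$ diagonalizes over $\mathbb{F}_p$ (split Cartan case, $\chi_1 \in \mathbb{F}_p^\times$) and divides $p+1$ if it diagonalizes only over $\mathbb{F}_{p^2}$ (non-split case, with $\chi_1 \in \mathbb{F}_{p^2}^\times \setminus \mathbb{F}_p^\times$ satisfying $\chi_1^p = \chi_1^{-1}$).

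For $p = 5$, the constraints $\chi_1 \neq \chi_1^{-1}$ and $\chi_1 \in \mathbb{F}_5^\times$ (the non-split case being eliminated by the determinant equation) force $\chi_1$ to have order exactly $4$; a direct enumeration then gives $|\bar\rho(G_L)| = 8$ and $|\bar\rho(G_K)| = 16$, and quotienting by the scalar subgroup (checked directly to be all of $\mathbb{F}_5^\times\cdot I$) produces a projective image of order $4$ whose Klein-four structure follows by computing that the swap commutes, modulo scalars, with the projective diagonal element of order $2$. For $p = 7$, the split-Cartan case (with $\chi_1 \in \mathbb{F}_7^\times$ of order $3$ or $6$) makes $\mathbb{P}\bar\rho(G_L)$ cyclic of order $3$, yielding $\mathbb{P}\bar\rho(G_K) \cong S_3$; the non-split case (with $\chi_1 \in \mathbb{F}_{49}^\times$ of order $8$) makes it cyclic of order $4$, yielding $D_4$.

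The main obstacle is eliminating the residual a priori possibilities --- most notably ruling out $\chi_1$ of order $4$ in the non-split $p = 7$ case, which would otherwise yield a $V_4$ projective image outside the claimed list, and similarly excluding the dihedral types of order $6$ or $8$ that could arise for $p = 5$. These exclusions must use the fact that $\bar\rho$ comes from an elliptic curve, for instance via local ramification constraints of the sort analyzed in Section \ref{local}; carrying them out cleanly is the delicate arithmetic part of the proof, while once done the structure of the remaining cases follows by bookkeeping from the Clifford-theoretic setup above.
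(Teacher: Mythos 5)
The paper does not prove this theorem at all: it is quoted verbatim as \cite[Proposition 9.1]{FLS}, so there is no internal proof to compare against. I will therefore evaluate your proposal on its own terms.

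Your Clifford-theoretic setup is sound: $\bar\rho|_H$ is semisimple because $H$ is normal, the isotypic case is excluded by irreducibility, $\bar\rho$ is induced from the unique quadratic subextension $L/K$ inside $K(\zeta_p)/K$, the determinant relation forces $\psi\psi^\sigma=\chi_p|_{G_L}$, and $\mathbb{P}\bar\rho(G_K)$ is dihedral of order $2n$ with $n=|\mathbb{P}\bar\rho(G_L)|$. One useful sharpening you did not state: since $\mathbb{P}\bar\rho(G_{K(\zeta_p)})\subseteq\mathbb{P}\bar\rho(G_L)$ is a normal cyclic subgroup of a dihedral group with cyclic (in fact a quotient of $(\mathbb{Z}/p)^\times$) cokernel, one must have $\mathbb{P}\bar\rho(G_{K(\zeta_p)})=\mathbb{P}\bar\rho(G_L)$, and $\det\bar\rho|_{G_{K(\zeta_p)}}=1$ puts $\bar\rho(G_{K(\zeta_p)})$ into a Cartan intersected with $\mathrm{SL}_2(\mathbb{F}_p)$, whose projective image has order dividing $(p-1)/2$ (split) or $(p+1)/2$ (non-split). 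For $p=5$ this gives $n=2$ (split) or $n=3$ (non-split); for $p=7$ it gives $n=3$ (split) or $n=4$ (non-split). So the exclusion you actually need is precisely: rule out the non-split Cartan when $p=5$, and nothing more.

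Here is the gap. Your proposal twice asserts that this exclusion comes from the determinant, and then, in the closing paragraph, speculates that the remaining exclusions ``must use \ldots local ramification constraints of the sort analyzed in Section \ref{local}.'' Neither is correct. The determinant alone does not rule out the non-split case for $p=5$ (a character $\psi_1:G_L\to\mathbb{F}_{25}^\times$ of order $12$ with $\psi_1^{\sigma}=\psi_1^{5}$ and $\psi_1^{6}=\chi_5|_{G_L}$ is perfectly consistent with everything you wrote), and the local-at-$p$ analysis of Section \ref{local} plays no role in this statement. The missing ingredient is \emph{archimedean}: $K$ is totally real, so every complex conjugation $c$ has $\bar\rho(c)$ conjugate to $\mathrm{diag}(1,-1)$. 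For $p=5$ the quadratic field $L=K(\sqrt{5})$ is again totally real, hence $c\in G_L$ and $\bar\rho(c)$ lies in the Cartan; since its eigenvalues $1$ and $-1$ are distinct and $\mathbb{F}_5$-rational, the Cartan must be split, forcing $n=2$ and $\mathbb{P}\bar\rho(G_K)\cong(\mathbb{Z}/2\mathbb{Z})^2$ (and one then checks $|\bar\rho(G_K)|=16$). For $p=7$ the quadratic field is $L=K(\sqrt{-7})$, which is totally imaginary, so $c\notin G_L$ and no such constraint applies: both the split case ($n=3$, giving $S_3$) and the non-split case ($n=4$, giving $D_4$) genuinely occur. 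Without invoking this complex-conjugation argument, the proposal cannot close, and it in fact predicts $S_3$ as a possible outcome for $p=5$, contradicting the statement.
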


We are now ready to prove Proposition \ref{mainadd}.
Let $p=5$ or $7$, $K$ a totally real field, and $\mathfrak{p}$ a prime ideal of $K$ dividing $p$.
Let $E$ be an elliptic curve over $K$.
Also, denote by $\Delta$ a minimal discriminant of $E_\mathfrak{p}:=E\otimes_K K_\mathfrak{p}$.

We make the following assumptions as in the statement of Proposition \ref{mainadd}:
\begin{enumerate}
\item $K$ is unramified at $\mathfrak{p}$, 
\item $\bar{\rho}_{E,p}$ (absolutely) irreducible, and
\item $E$ has additive reduction at $\mathfrak{p}$.
\end{enumerate}
%In many cases under the assumptions of Theorem \ref{main}, $\bar{\rho}_{E,p}|_{G_{K(\zeta_p)}}$ is in fact absolutely irreducible, and hence $E$ is modular by Theorem \ref{mlt}.

We split the proof into three cases according to reduction of $E$:\\

(i) 
If $E$ has additive potential multiplicative reduction at $\mathfrak{p}$, then Corollary \ref{Csemist} for $E_\mathfrak{p}$ implies that $\mathbb{P}\bar{\rho}_{E,p}(G_K)$ has a cyclic subgroup of order $p-1$.
Thus, Theorem $\ref{projim}$ implies that  $\bar{\rho}_{E,p}|_{G_{K(\zeta_p)}}$ cannot be absolutely reducible. Hence $E$ is modular by Theorem \ref{mlt}.\\

(ii)
Suppose next that $E$ has additive potential ordinary reduction at $\mathfrak{p}$.\\
If $p=5$, then Corollary \ref{Cord5} for $E_\mathfrak{p}$ shows that $\mathbb{P}\bar{\rho}_{E,5}(G_K)$ contains a cyclic subgroup of order $4$. Thus, by Theorem \ref{projim}, $\bar{\rho}_{E,5}|_{G_{K(\zeta_5)}}$ is absolutely irreducible, whence $E$ is modular.

Also, if $p=7$ and $v(\Delta)\equiv 0,2$ mod $3$, then Corollary \ref{Cord7} shows that $\mathbb{P}\bar{\rho}_{E,7}(G_K)$ has a cyclic subgroup of order $6$. Hence, Theorem \ref{projim} implies that $\bar{\rho}_{E,7}|_{G_{K(\zeta_7)}}$ is absolutely irreducible, in which case $E$ is modular.

We consider the remaining cases; that is, $p=7$ and $v_\mathfrak{p}(\Delta)\equiv 1$. If $j_E\neq 0$, then these cases are equivalent to the case $v_\mathfrak{p}(j_E)\equiv 2$ modulo $3$; in fact, this follows by taking a minimal model $y^2=x^3+Ax+B$ of $E_\mathfrak{p}$ and noting that $j_E=1728A^3/\Delta$. If $j_E =0$,  then $E$ is modular since $E$ becomes defined over $\mathbb{Q}$ after suitable (solvable) base change.\\

(iii)
Finally, suppose that $E$ has additive potential supersingular reduction at $\mathfrak{p}$. 

If the condition (a) in Proposition \ref{ss} holds, then Corollary \ref{ssa},  Theorem \ref{projim}, and Theorem \ref{mlt} show that $E$ is modular.

Assume the condition (b) in Proposition \ref{ss} holds. Then we have the following two cases:
\begin{itemize}
\item If $p=5$ and $v(\Delta)\equiv 0,1$ mod $3$, then $\mathbb{P}\bar{\rho}_{E,5}(G_K)$ contains a cyclic subgroup of order 6 by Corollary \ref{ss5}.  Hence, Theorem \ref{projim} and Theorem \ref{mlt} show that $E$ is modular.
If $j_E\neq 0$, then the remaining case when $p=5$ and $v_\mathfrak{p}(\Delta)\equiv2$ mod $3$ can be rephrased as  $v_\mathfrak{p}(j_E)\equiv 1$ modulo $3$. If $j_E=0$, then $E$ is modular as in (ii).
\item If $p=7$, then $E$ is modular by  Corollary \ref{ss7} with Theorem \ref{projim}, and Theorem \ref{mlt}.
\end{itemize}

In summary, combining (i), (ii), and (iii), we see that $E$ is modular unless the following conditions hold:
\begin{enumerate}
\item $p=5$, $v_\mathfrak{p}(j_E)\equiv 1$ mod $3$, and $E$ has additive potential good (supersingular) reduction at $\mathfrak{p}$, or
\item $p=7$, $v_\mathfrak{p}(j_E)\equiv 2$ mod $3$, and $E$ has additive potential good (ordinary) reduction at $\mathfrak{p}$.
\end{enumerate}
This proves Proposition \ref{mainadd}.

\section{Proof of Theorem \ref{main2}}\label{pfmain2}
Let $K$ and $E$ be as in Theorem \ref{main2}.
If $E$ has semi-stable reduction at some prime dividing $7$, then the assertion follows from \cite[Theorem 7]{FLS}.
So suppose that $E$ has additive reduction at every prime $\mathfrak{p}|7$.
By Proposition \ref{mainadd} and Theorem \ref{mlt}, we have only to consider the case when $E$ has additive potential ordinary reduction at every prime $\mathfrak{p}|7$ and $\bar{\rho}_{E,7}|_{G(K(\zeta_7))}$ is absolutely reducible.
In this case, we may apply the modularity lifting theorem \cite{SW2} due to Skinner-Wiles in order to prove the modularity of $E$.
% are both absolutely reducible. We show that these two cases cannot hold simultaneously.
%Assume the contrary. By Theorem \ref{projim} for $p=5$, the image $\bar{\rho}_{E,5}(G_K)$ is a $2$-group (of order 16).
%On the other hand, the absolute reducibility for $p=7$ forces us to be in the case \ref{p7}, which shows that $E$ admits good reduction at $\mathfrak{p}|7$ after base change of degree $3$. So  $\bar{\rho}_{E,5}(G_K)$ contains $\bar{\rho}_{E,5}(I_\mathfrak{p})$ as a subgroup of order 3, which is a contradiction. Thus, at least one of $\bar{\rho}_{E,p}|_{G_{K(\zeta_5)}}$ and $\bar{\rho}_{E,p}|_{G_{K(\zeta_7)}}$ must be absolutely irreducible.

%\begin{Rem}
%In \cite{Th}, Thorne modified the construction of Taylor-Wiles system and proved the modularity of some Galois representation %without assuming the irreducibility when restricted to $G_{K(\zeta_5)}$. As a consequence, he deduced that, if $K$ is a totally real field %with $\sqrt{5}\notin K$, and if $E$ is an elliptic curve over $K$ with absolutely irreducible mod $5$ representation, then $E$ is modular.
%\end{Rem}
\begin{Rem}
A similar argument does not reprove Theorem \ref{Thorne} even if $K$ is just unramified at $5$;
in fact, Proposition \ref{mainadd} implies that an elliptic curve $E$ over $K$ with $\bar{\rho}_{E,5}|_{G(K(\zeta_5))}$ absolutely reducible must have additive potential supersingular reduction at every prime $\mathfrak{p}|5$. In such a case, the theorem of Skinner-Wiles \cite{SW2} is not available.
\end{Rem}

\begin{Rem}
In his thesis \cite{Bao}, Le Hung essentially shows the following;
if $K$ is a totally real field unramified at 5 and 7, and if $E$ is an elliptic curve over $K$ with both $\bar{\rho}_{E,p}$ ($p=5,7$) irreducible, then $E$ is modular.
This follows from \cite[Proposition 6.1]{Bao} combined with the modularity lifting theorem due to Skinner-Wiles \cite{SW2}.
\end{Rem}

\begin{Rem}
Very recently, S. Kalyanswamy \cite{Ka} announced to prove a version of Theorem \ref{main2}.
He actually proves a new modularity  theorem \cite[Theorem 3.4]{Ka} for certain Galois representations, and applies it to elliptic curves in \cite[Theorem 4.4]{Ka}.
For clarity, we describe the difference between Theorem \ref{main2} and \cite[Theorem 4.4]{Ka}: Kalyanswamy considers elliptic curves over a totally real field $F$ with $F\cap \mathbb{Q}(\zeta_7)=\mathbb{Q}$, which is weaker than the assumption that $F$ is unramified at 7, while he also imposes an additional condition on the mod 7 Galois representations. Therefore, both Theorem \ref{main2} and \cite[Theorem 4.4]{Ka} have their own advantage.
\end{Rem}

\section{Proof of Theorem \ref{mainab}}
\label{pfmainab}

For the proof of Theorem \ref{mainab}, we need another modularity theorem due to Freitas \cite{Fr}.
This theorem essentially follows from \cite{SW1}, \cite{SW2}, and Theorem \ref{mlt}.

\begin{Thm}
\cite[Theorem 6.3]{Fr}
\label{semi}
Let $K$ be an abelian totally real field where 3 is unramified. Let $E$ be an elliptic curve over $K$ semistable at all primes $\mathfrak{p}|3$.
Then, $E$ is modular.
\end{Thm}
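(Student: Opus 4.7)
The plan is to analyze $\bar{\rho}_{E,3}:G_K\to\mathrm{GL}_2(\mathbb{F}_3)$ and split into cases according to the size and nature of its image, so that in each case one of the three modularity inputs mentioned in the discussion before the theorem (\cite{SW1}, \cite{SW2}, Theorem \ref{mlt}) applies. The semistability of $E$ at every prime $\mathfrak{p}\mid 3$ makes $\bar{\rho}_{E,3}|_{G_{K_\mathfrak{p}}}$ of flat or Steinberg type, matching the local deformation conditions of these theorems, and the abelianness of $K$ makes cyclic solvable base change available, which is used both for Langlands-Tunnell and for descent in the lifting theorems.

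First I would treat the case when $\bar{\rho}_{E,3}|_{G_{K(\zeta_3)}}$ is absolutely irreducible. The image of $\bar{\rho}_{E,3}$ in $\mathrm{GL}_2(\mathbb{F}_3)$ has projective image inside $S_4$, which is solvable, so the Langlands-Tunnell theorem combined with cyclic base change along the abelian tower $K\supset\mathbb{Q}$ yields a Hilbert modular form attached to $\bar{\rho}_{E,3}$. When $E$ is ordinary at every $\mathfrak{p}\mid 3$ (which includes all multiplicative reduction cases) the nearly ordinary lifting theorem \cite{SW2} at $p=3$ then upgrades this to modularity of $E$. If $E$ has supersingular reduction at some $\mathfrak{p}\mid 3$, one cannot apply \cite{SW2} directly; instead I would perform a $3$-$5$ switch, choosing an auxiliary elliptic curve $E'/K$ with $\bar{\rho}_{E',5}\simeq\bar{\rho}_{E,5}$ but with more favorable mod $3$ behavior, deducing modularity of $E'$ from the previous case, and then transferring modularity back to $E$ via Theorem \ref{mlt} at $p=5$.

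Next I would handle the case when $\bar{\rho}_{E,3}|_{G_{K(\zeta_3)}}$ is absolutely reducible. If $\bar{\rho}_{E,3}$ itself is reducible, I would appeal to \cite{SW1} on residually reducible representations; semistability of $E$ at each $\mathfrak{p}\mid 3$ gives the required ordinariness, and unramifiedness of $K$ at $3$ controls the ramification of the two constituent characters, which is what makes their distinctness verifiable. If $\bar{\rho}_{E,3}$ is irreducible but reducible on $G_{K(\zeta_3)}$, then it has dihedral image induced from $G_{K(\zeta_3)}$, and I would once again perform a $3$-$5$ switch to replace $E$ by a curve whose mod $3$ image avoids this dihedral locus, and then apply Theorem \ref{mlt} to propagate modularity back to $E$.

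The hardest step will be executing the $3$-$5$ switch over the fixed abelian totally real field $K$ in both the supersingular and the dihedral-induced subcases: one must show that on the twist of the modular curve $X(5)$ parametrizing curves with mod $5$ Galois representation isomorphic to $\bar{\rho}_{E,5}$ there exist $K$-rational points avoiding the finitely many bad conjugacy classes of subgroups of $\mathrm{GL}_2(\mathbb{F}_3)$ (reducible, dihedral-induced from $K(\zeta_3)$, and the supersingular locus at each $\mathfrak{p}\mid 3$). Producing such a point requires a Hilbert-irreducibility-type argument over $K$, together with a careful check that the resulting $E'$ satisfies the hypothesis of Theorem \ref{mlt}, namely that $\bar{\rho}_{E',5}|_{G_{K(\zeta_5)}}$ remains absolutely irreducible; verifying this last condition for the auxiliary curve is the principal technical bottleneck.
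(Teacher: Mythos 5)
The paper itself gives no proof of this theorem: it is a bare citation of \cite[Theorem 6.3]{Fr}, preceded only by a one-sentence remark that the result ``essentially follows from \cite{SW1}, \cite{SW2}, and Theorem \ref{mlt}.'' Your proposal is therefore a reconstruction of Freitas's argument. The overall shape you sketch---SW1 in the residually reducible case, SW2 (via Langlands--Tunnell and solvable base change along the abelian tower) in the nearly-ordinary irreducible case, and some other device for the remaining cases---is a reasonable place to start. However, the device you choose for the remaining cases, the $3$--$5$ switch, does not work.

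The switch is circular exactly where you need it. To transfer modularity from the auxiliary curve $E'$ back to $E$ you invoke Theorem \ref{mlt} at $p=5$, which requires $\bar{\rho}_{E,5}|_{G_{K(\zeta_5)}}$ to be absolutely irreducible. Since $\bar{\rho}_{E',5}\simeq\bar{\rho}_{E,5}$, this condition is independent of the choice of $E'$: either it already holds for $E$, in which case Theorem \ref{mlt} applies directly and no switch was ever needed, or it fails, in which case no choice of $E'$ can carry modularity back to $E$. The ``principal technical bottleneck'' you flag---verifying absolute irreducibility of $\bar{\rho}_{E',5}|_{G_{K(\zeta_5)}}$---is thus not a technical difficulty to be overcome but a logical dead end, and your supersingular-at-$3$ and dihedral subcases remain completely open. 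Beyond the circularity, executing a $3$--$5$ switch over an arbitrary abelian totally real field requires producing $K$-rational points on twists of $X(5)$, which is precisely the modular-curve geometry the paper says it avoids (Remark following Theorem \ref{mainab}) and which is absent from the list of ingredients (SW1, SW2, Theorem \ref{mlt}) the paper attributes to the proof of this theorem. Freitas's argument must handle the leftover cases without any switch, by exploiting additional constraints available when the mod $5$ and mod $7$ images are small, rather than by replacing $E$ with an auxiliary curve.
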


Also, we note a well-known result of a torsion version of Neron-Ogg-Shafarevich. 

\begin{Lem}
\cite[VII, Exercises 7.9]{Sil}
\label{inertia}
Let $F$ be a local field, $E$ an elliptic curve over $F$ with potential good reduction, and $m\geq 3$ an integer relatively prime to the residual characteristic of $F$.
\begin{itemize}
\item[(a)] The inertia group of $F(E[m])/F$ is independent of $m$.
\item[(b)] The extension $F(E[m])/F$ is unramified if and only if $E$ has good reduction.
\end{itemize}
\end{Lem}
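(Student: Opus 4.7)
The plan is to deduce both parts from the N\'eron-Ogg-Shafarevich criterion together with a simple injectivity statement for finite subgroups of $\mathrm{GL}_2(\mathbb{Z}_\ell)$. Let $p$ be the residual characteristic of $F$. Since $E$ has potential good reduction, for any prime $\ell\neq p$ the inertia group $I=I_F$ acts on the $\ell$-adic Tate module $T_\ell E$ through a finite quotient $\Phi_\ell$ (apply N\'eron-Ogg-Shafarevich over the finite extension of $F$ where $E$ acquires good reduction). Writing $H_m=\ker(I\to\mathrm{Aut}(E[m]))$, the inertia subgroup of $F(E[m])/F$ is $I/H_m$, so the task is to show $H_m$ is independent of $m$ and equals $I$ precisely when $E$ has good reduction.

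The key algebraic input is that $1+\ell M_2(\mathbb{Z}_\ell)$ is torsion-free for $\ell\geq 3$, as is $1+4M_2(\mathbb{Z}_2)$; both are routine $\ell$-adic valuation computations. Consequently any finite subgroup of $\mathrm{GL}_2(\mathbb{Z}_\ell)$ injects into $\mathrm{GL}_2(\mathbb{F}_\ell)$ for $\ell\geq 3$ and into $\mathrm{GL}_2(\mathbb{Z}/4\mathbb{Z})$ for $\ell=2$. Applied to $\Phi_\ell$ this gives
\begin{equation*}
H_\ell = \ker(I\to \mathrm{Aut}(T_\ell E)) \text{ when } \ell\geq 3,\ \ell\neq p, \qquad H_4=\ker(I\to\mathrm{Aut}(T_2 E)) \text{ when } p\neq 2.
\end{equation*}
This common subgroup $H^\infty$ does not depend on the choice of $\ell$: its fixed field in $\bar F$ is the smallest extension of $F^{\mathrm{unr}}$ over which $E$ has good reduction, which is intrinsic by N\'eron-Ogg-Shafarevich.

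For $m\geq 3$ coprime to $p$, either $m$ admits an odd prime divisor $\ell$ (automatically $\neq p$), so that $E[\ell]\subseteq E[m]$, or $m=2^k$ with $k\geq 2$ and $p\neq 2$, so that $E[4]\subseteq E[m]$. Together with the elementary relations $H_{m'}\supseteq H_m$ for $m'\mid m$ and $H_{ab}=H_a\cap H_b$ for coprime $a,b$, this forces $H_m=H^\infty$ in either case, proving (a). Part (b) is then immediate: $F(E[m])/F$ is unramified iff $H_m=I$, which by the above is equivalent to $H^\infty=I$, i.e.\ to $E$ having good reduction. The only delicate point is the case $\ell=2$, where the first principal congruence subgroup fails to be torsion-free because of $-1\in\mathrm{GL}_2(\mathbb{Z}_2)$; one must therefore pass through mod-$4$ reductions, which is precisely why the hypothesis is $m\geq 3$ rather than $m\geq 2$.
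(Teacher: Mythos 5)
The paper gives no proof of this lemma: it is cited to Silverman \cite[VII, Exercises 7.9]{Sil} and used as a black box, so there is no ``paper's own proof'' to compare against. Your argument is correct, and it is exactly the standard proof of that exercise, going back to Serre--Tate: reduce part (a) to the $\ell$-independence of $\ker\bigl(I \to \operatorname{Aut}(T_\ell E)\bigr)$ via the torsion-freeness of the principal congruence subgroups $1+\ell M_2(\mathbb{Z}_\ell)$ for $\ell\geq 3$ and $1+4M_2(\mathbb{Z}_2)$, identify the common kernel with the Galois group of $\bar F$ over the minimal extension of $F^{\mathrm{unr}}$ acquiring good reduction, and read off part (b) from N\'eron--Ogg--Shafarevich. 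You also correctly flag the $\ell=2$ subtlety ($-1\in 1+2M_2(\mathbb{Z}_2)$) that explains the hypothesis $m\geq 3$. A minor stylistic point: when you pass to $F^{\mathrm{unr}}$ you implicitly use that good reduction and the inertia action on $T_\ell E$ are insensitive to completion, which is worth saying if you want the invocation of N\'eron--Ogg--Shafarevich over $L/F^{\mathrm{unr}}$ to be airtight; this does not affect correctness.
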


Now we are ready to prove Theorem \ref{mainab}.
\vspace{0.3cm}\\
\textit{Proof of Theorem \ref{mainab}.\ }
Let $K$ be as in Theorem \ref{mainab} and $E$ be an elliptic curve over $K$.
We prove that $E$ is modular.
By Theorem \ref{main2} and Theorem \ref{Thorne}, we may assume that both $\bar{\rho}_{E,5}$ and $\bar{\rho}_{E,7}$ are reducible;
that is, $\bar{\rho}_{E,p}$ for $p=5,7$ factors through a Borel subgroup $B(\mathbb{F}_p)$.
Note that $B(\mathbb{F}_5)$ (resp. $B(\mathbb{F}_7)$) is of order $4^2\cdot5$ (resp. $6^2\cdot7$).  

Let $\mathfrak{p}$ be a prime of $K$ dividing $3$.
By Lemma \ref{inertia} (a), the representations of the inertia subgroup $I_\mathfrak{p}$ on $E[5]$ and $E[7]$ factor through the same quotient $I_\mathfrak{p}'$.
Since $|I_\mathfrak{p}'|$ divides $\mathrm{gcd}(4^2\cdot5, 6^2\cdot7)=4$, $I_\mathfrak{p}'$ is tame (and so cyclic) of order dividing 4. 
Subgroups of order 4 in $B(\mathbb{F}_7)$  are not cyclic, and hence $I_\mathfrak{p}'$ is trivial or of order 2.
If $E_\mathfrak{p}=E\otimes K_\mathfrak{p}$ has potential good reduction, $E_\mathfrak{p}$ becomes of good reduction over at most a quadratic extension of $K_\mathfrak{p}$ by Lemma \ref{inertia} (b).
Also, in the case where $E_\mathfrak{p}$ has potential multiplicative reduction, $E_\mathfrak{p}$ becomes isomorphic to a Tate curve after a quadratic extension of $K_v$.
Summarizing both cases, we see that $E_\mathfrak{p}$ becomes semi-stable after a quadratic base change.
This implies that, if $E_\mathfrak{p}$ has bad reduction, a quadratic twist of $E_\mathfrak{p}$ (by a uniformizer of $K_\mathfrak{p}$) is semi-stable.
Using the weak approximation theorem, we find $d\in K$ such that the quadratic twist $E^{(d)}$ of $E$ by $d$ is semi-stable at every prime $\mathfrak{p}|3$.
Theorem \ref{semi} shows that $E^{(d)}$ is modular.
Since the modularity of elliptic curves is invariant under quadratic twists, it follows that $E$ is modular.
 \hfill $\Box$

\section*{Acknowledgement}
The author expresses his deepest gratitude to his advisor Prof. Takeshi Saito, who gave him many helpful comments to enhance the arguments in this paper and encouraged him during his writing this paper.
His thanks also goes to Bao Le Hung, who kindly answered to the author's question on Skinner-Wiles' theorem.

\end{document}